\newtheorem{theorem}{Theorem}[section]
\newtheorem{definition}[theorem]{Definition}
\newtheorem{lemma}[theorem]{Lemma}
\newtheorem{proposition}[theorem]{Proposition}
\newtheorem{remark}[theorem]{Remark}
\newenvironment{proof}[1][Proof]{\textbf{#1.} }{\hfill\rule{0.5em}{0.5em}}
{\catcode`\@=11\global\let\AddToReset=\@addtoreset
\AddToReset{equation}{section}

\AddToReset{theorem}{section}

\title{Calder\'on-Zygmund gradient estimates for $p$-Laplace systems with BMO complex coefficients}

\author{Van-Chuong Quach\thanks{Department of Mathematics, Dong Nai University, Bien Hoa City, Dong Nai Province, Vietnam; \texttt{chuongqv@dnpu.edu.vn}}; Thanh-Nhan Nguyen\thanks{Group of Analysis and Applied Mathematics, Department of Mathematics, Ho Chi Minh City University of Education, Ho Chi Minh city, Vietnam; \texttt{nhannt@hcmue.edu.vn}}; Minh-Phuong Tran\footnote{Corresponding author} \thanks{Applied Analysis Research Group, Faculty of Mathematics and Statistics, Ton Duc Thang University, Ho Chi Minh city, Vietnam; \texttt{tranminhphuong@tdtu.edu.vn}}}

\date{\today}

\begin{document}

\maketitle
\begin{abstract}

This work is concerned with global gradient bounds for a class of divergence-form degenerate elliptic systems with complex-valued coefficients. Notably, the leading coefficients are merely required to be sufficiently small in BMO, which is strictly weaker than the VMO condition. In the complex setting, the well-posedness of this problem was recently investigated in~\cite{KV25}, where the authors established a strong accretivity condition on the leading coefficients, and this structural condition allows them to derive Schauder-type estimates for weak solutions. In our study, it has already been observed that gaining existence and uniqueness of weak solutions is possible under a natural and less restrictive assumption on the complex-valued coefficients. Following this direction, we prove a global Cader\'on-Zygmund-type estimate for weak solutions, from which the Morrey-space regularity follows as a consequence. This paper is a contribution to the better understanding of solution behavior and may be viewed as part of a series of works aimed at extending regularity theory in the complex-valued setting.

\medskip

\noindent Keywords. Calder\'on-Zygmund estimates; $p$-Laplacian; Complex coefficients.

\medskip

\noindent  2020 Mathematics Subject Classification. 35D30, 35J92, 35B65, 46E30.

\end{abstract}
\maketitle

\tableofcontents
\section{Introduction and main results}
\label{sec:intro}

In this paper, we are interested in the Cauchy-Dirichlet problem of degenerate elliptic systems
\begin{align}\label{eq-main}
-\mathrm{div}\left(\mathfrak{a}(x)\mathbb{V}_{p}^{\mu}(\mathrm{D}\mathbf{u})\right) = -\mathrm{div}\left(|\mathbf{F}|^{p-2}\mathbf{F}\right)  \mbox{ in } \Omega, \mbox { and } \mathbf{u} = \mathbf{0} \mbox{ on } \partial \Omega,
\end{align}
where $\Omega$ is an open bounded subset of $\mathbb{R}^n$, with $n \ge 2$; the exponent $p \in (1,\infty)$; the given source term $\mathbf{F}: \Omega \to \mathbb{C}^{N \times n}$ with $N \in \mathbb{N}$ arbitrary and $N \ge 1$; the given mapping $\mathfrak{a}: \Omega \to \mathbb{C}$ is bounded measurable complex-valued function defined on $\Omega$. In addition, the problem poses a structural operator associated with the growth behavior $\mathbb{V}_{p}^{\mu}: \mathbb{C}^{N\times n} \to \mathbb{C}^{N\times n}$ for $\mu \in [0,1]$, which is defined by 
\begin{align}\label{def-Vps}
\mathbb{V}_{p}^{\mu}(\eta) = (\mu^2 + |\eta|^{2})^{\frac{p-2}{2}}\eta, \quad \eta \in \mathbb{C}^{N\times n}.
\end{align}
In particular, $\mathbb{V}_{p}^{\mu}$ exhibits $(p-1)$-growth, in the sense that $\mathbb{V}_{p}^{\mu}(\eta) \approx |\eta|^{p-1}$, for large $|\eta|$. Here, the notation $\mathbb{C}^{N\times n}$ stands for the spaces of $N\times n$ real-valued matrices, and in the problem settings, we also introduce the \emph{degeneracy parameter} $0 \le \mu \le 1$, which distinguishes between the degenerate ($\mu=0$) and non-degenerate case $\mu>0$. In particular, when $\mu=0$ and $\mathfrak{a}\equiv 1$, one has in mind the $p$-Laplacian system as a model case. On the right-hand side, we shall assume that the vector field $\mathbf{F} \in L^p(\Omega,\mathbb{C}^{N\times n})$. Equivalently, one could consider the right-hand side of the type $\mathrm{div}(\mathbf{G})$ under the assumption 
\begin{align*}
\mathbf{F} \in L^p(\Omega,\mathbb{C}^{N\times n}) \Longleftrightarrow \mathbf{G}=|\mathbf{F}|^{p-2}\mathbf{F} \in L^{p'}(\Omega,\mathbb{C}^{N\times n}).
\end{align*}
Corresponding to the standard ellipticity and growth conditions imposed on the differential operator on the left-hand side, modeled on the real case, it requires $|\mathfrak{a}(x)|$ to be uniformly bounded, and in this paper, we assume that
\begin{align}\label{elip-cond}
\gamma_1 \le |\mathfrak{a}(x)| =  \left(|\mathrm{Re}(\mathfrak{a}(x))|^2+|\mathrm{Im}(\mathfrak{a}(x))|^2\right)^{\frac{1}{2}} \le \gamma_2, \mbox{ for a.e. } x \in \Omega,
\end{align}
where $\gamma_1, \gamma_2$ are positive constants. The main focus is set on the structural conditions placed on the given data, from which the existence, uniqueness, and regularity properties of a complex-valued weak solution $\mathbf{u}: \Omega \to \mathbb{C}^{N}$ are derived. From the standard monotonicity condition for the differential operator on the left-hand side, the second assumption of the measurable function $\mathfrak{a}$ naturally emerges when estimating an expression of the form $\mathfrak{a}(x) \langle \mathbb{V}_{p}^{\mu}(\eta_1) - \mathbb{V}_{p}^{\mu}(\eta_2), \eta_1 - \eta_2 \rangle$, for $\eta_1, \eta_2 \in \mathbb{C}^{N\times n}$. It is remarkable here that, for the real-valued case, with a slight intentional abuse of notation, we also use the symbol $\mathbb{V}_{p}^{\mu}$ as defined in~\eqref{def-Vps}, namely
\begin{align}\notag
\mathbb{V}_{p}^{\mu}: \mathbb{R}^{2N \times n} \to \mathbb{R}^{2N \times n}, \quad z \in \mathbb{R}^{2N \times n} \mapsto \mathbb{V}_{p}^{\mu}(z) = (\mu^2 + |z|^{2})^{\frac{p-2}{2}}z.
\end{align}
Recalling that in the real-valued setting, a basic property of the operator reads as: there exist two constants $c_1,c_2>0$ depending on $p,N,n$ such that
\begin{align}\label{basic-ineq}
c_1 \left(\mu^2 + |y|^2 + |z|^2\right)^{\frac{p-2}{2}}|y - z|^2 \le \left(\mathbb{V}_{p}^{\mu}(y) - \mathbb{V}_{p}^{\mu}(z)\right) : (y - z) \le c_2 \left(\mu^2 + |y|^2 + |z|^2\right)^{\frac{p-2}{2}}|y - z|^2,
\end{align}
for every $y,z \in \mathbb{R}^{2N \times n}$, where $:$ denotes the canonical inner product in $\mathbb{R}^{2N \times n}$ (see Section~\ref{sec:pre}). In view of the presence of two constants $c_1,c_2$ in~\eqref{basic-ineq}, we further assume that there exists a constant $0<\gamma_0<\infty$ such that
\begin{align}\label{new-cond}
\mathrm{Re}(\mathfrak{a}(x))-c_0|\mathrm{Im}(\mathfrak{a}(x))|>\gamma_0, \ \mbox{ for a.e. } x \in \Omega,
\end{align}
where $c_0$ is given by
\begin{align}\label{c_0}
c_0 = \sqrt{\left(\frac{c_2}{c_1}\right)^2-1}.
\end{align} 
In our context, the imaginary part of the coefficients typically gives rise to dispersive effects, which need to be controlled to preserve the accretive structure of the leading operator on the left-hand side (closely related to the dissipativity condition in the sense of Cialdea-Maz’ya in~\cite{CM2005}). The conditions~\eqref{elip-cond} and~\eqref{new-cond} tell us that the ratio of the imaginary and real parts of $\mathfrak{a}$ is uniformly bounded, i.e. $|\mathrm{Im}(\mathfrak{a}(\cdot))| \le C(\gamma_0,c_0)\mathrm{Re}(\mathfrak{a}(\cdot))$. Geometrically, the values of the coefficient $\mathfrak{a}$ are always confined to a sectorial region in the complex plane, with an angular deviation defined by the constant $C(\gamma_0,c_0)$. In addition, together with the strongly accretive ellipticity of the operator $\mathbb{V}_p^\mu$ (which will be established in Lemma~\ref{lem:Re-Im}), these assumptions guarantee the existence and uniqueness of weak solutions (followed by standard methods, such as Lax-Milgram and the monotone operator theory). We send the reader to Remark~\ref{rem:EU} for a detailed discussion. Further, it is worth noticing that even for the case of real-valued coefficients, that is, when $\mathfrak{a}: \Omega \to \mathbb{R}$, the main results of this paper are still valid under these two assumptions~\eqref{elip-cond} and~\eqref{new-cond}. 

When the coefficients are real-valued, the problem~\eqref{eq-main} is very well understood from the analytical point of view, as it falls within the broader class of (possibly degenerate) elliptic operators in divergence form, including the $p$-Laplace equations and systems. Over the past few years, there has been tremendous interest in developing Calder\'on-Zygmund theory to establish gradient estimates ($L^p$ or $W^{1,p}$ regularity) for weak solutions with real coefficients. For a selection of the extensive literature along this research line, to which the reader may consult, see, for instance,~\cite{AM2002, AM2007, BDGN2022, BW2004, DiBenedetto1993, DM2010, DM2023, Iwaniec83, KZ, PNmix, NT25, PNJDE, TN23, PNJFA}. Further, an account of recent advances, some challenging problems, and an extensive list of references can be found in the existing surveys in~\cite{Min2017, MP2020}. 

One of the distinguishing features of this paper is that the complex-valued coefficients are described by the map $x \in \Omega \mapsto \mathfrak{a}(x) \in \mathbb{C}$. The study of problems with complex-valued coefficients is motivated by geometric and physical problems, where the linear (or non-linear) equations arise in a natural way.  It has attracted increasing attention, notably in applications such as electromagnetic wave propagation in lossy dielectrics (where conductivity/permittivity can be complex), anisotropic rheology (involving complex varying coefficients), nonlinear viscoelasticity, Calder\'on problems arising in electrical impedance tomography to nonlinear materials, and the theory of $p$-harmonic complex exponentials, among others. We refer the reader to vibrant studies in~\cite{SS2010, Spence1973, Calderon1980, Uhlmann2009, Barton} and the references therein. As far as we are concerned, the existence, uniqueness, and regularity theory of elliptic problems involving complex coefficients poses several challenges requiring specialized techniques. In general, weak solutions to the problem with a measurable complex-valued coefficient $\mathfrak{a}$ can be unbounded or fail to be continuous even when $\mathfrak{a}$ is a complex constant matrix (cf.~\cite{FM2008, MNP1989, DP2020}). However, classical regularity results (such as local boundedness and continuity) can be recovered under certain restrictive conditions on the real part $\mathrm{Re}(\mathfrak{a})$, to satisfy uniform ellipticity, and additional assumptions to control the complex part $\mathrm{Im}(\mathfrak{a})$. Due to the lack of continuity and the failure of the classical maximum principle, problems involving complex coefficients are, in general, less well understood. Nonetheless, a few special cases have been previously investigated. Regarding the linear case $p=2$, the solvability and regularity of divergence form complex coefficients equations (with elliptic operator $\mathcal{L}\mathbf{u}=-\mathrm{div}(\mathfrak{a}(x)\mathrm{D}\mathbf{u})$) have been widely investigated by many authors. An interesting question, originating from the work of Maz'ya \emph{et al.} in~\cite{MNP1989} on the H\"older continuity of complex-valued solutions, has motivated systematic and extensive investigations into the regularity properties under minimal data assumptions. A non-exhaustive sample of contributions along this direction of research includes~\cite{HKMP2015, DHM2021, DP2020, A3HK2011, CDS2011, Frehse2008, FM2008}. 

One of the recent advances in the investigation of $p$-Laplacian type elliptic systems with complex-valued coefficients is a recent paper by Kim-Vestberg~\cite{KV25}, in which it has been investigated the existence, uniqueness, and global H\"older continuity of weak solutions in a rectangular-type domain. As far as we are aware, in the literature, there has been too little written on the solutions to the problem of type~\eqref{eq-main}. Existence, uniqueness, and H\"older regularity estimates developed in~\cite{KV25} present a novel contribution to the topic, where the complex-valued map $\mathfrak{a}$ is decomposed into its real and imaginary parts that satisfy two conditions: there exist $0<\nu<L<\infty$ such that
\begin{align}\label{eq:cona_KV}
\begin{cases}
\mathrm{Re}(\mathfrak{a}(x)) - \displaystyle{\frac{c_2}{c_1}}|\mathrm{Im}(\mathfrak{a}(x))| >\nu; \\
\mathrm{Re}(\mathfrak{a}(x)) + |\mathrm{Im}(\mathfrak{a}(x))| \le L,
\end{cases}
\end{align}
for almost everywhere $x \in \Omega$. Let us now discuss a notable difference of our structure conditions presented in~\eqref{elip-cond}-\eqref{new-cond} and those established in~\cite{KV25}. Specifically, compared to the assumption in~\eqref{eq:cona_KV}$_1$, condition~\eqref{new-cond} provides a natural and less restrictive improvement. Exploiting these conditions, a minor modification of the proofs in our study allows us to get similar existence and uniqueness results. Once these structural conditions are at hand, this paper aims to proceed further in the investigation of regularity theory, in particular to address the Calder\'on-Zygmund estimate for weak solutions to~\eqref{eq-main}. As a consequence, the relevant gradient bounds for $p$-Laplace systems involving complex coefficients in a variety of specific function spaces will be directly derived.

Before presenting the main results in this paper, we specify our assumptions as follows. 

\textbf{(A1) Basic assumptions on complex-valued coefficient.}  In the entirety of the paper, we deal with a divergence form elliptic system of type~\eqref{eq-main}, where the map $\mathfrak{a}: \Omega \to \mathbb{C}$ is a complex coefficient satisfying both structure conditions~\eqref{elip-cond} and~\eqref{new-cond} as mentioned above. Further, in the original problem, we also consider $\mathbb{V}_{p}^{\mu}: \mathbb{C}^{N\times n} \to \mathbb{C}^{N\times n}$ with $\mu \in [0,1]$, is defined in~\eqref{def-Vps}. It is worth noting here that the parameter $\mu \in [0,1]$ is introduced to distinguish both degenerate and non-degenerate regimes. When $\mu=0$, our results correspond precisely to elliptic systems governed by the standard $p$-Laplacian $-\mathrm{div}(\mathfrak{a}(x)|\mathrm{D}\mathbf{u}|^{p-2}\mathrm{D}\mathbf{u})=-\mathrm{div}\left(|\mathbf{F}|^{p-2}\mathbf{F}\right)$. 

\textbf{(A2) Assumption on $\partial\Omega$.} In our study, global gradient bounds are offered in domains that enjoy minimal regularity assumptions on the boundary. Here, we assume that $\Omega$ is $(\epsilon_0,r_0)$-Reifenberg flatness for $r_0>0$ and an appropriate small number $\epsilon_0 \in (0,1)$. It means that our domain is flat in the following sense: at every $x_0 \in \partial \Omega$ and $0<r<(1-\epsilon_0)r_0$, it is possible to find a coordinate system $\{\xi_1, \xi_2,\cdots,\xi_n\}$ such that the new origin ${\mathbf{0}}_n$ belongs to $\Omega$ and
\begin{align}\notag 
\begin{cases} 
x_0 =\left(0,0,...,0, - \displaystyle{\frac{r\epsilon_0}{1-\epsilon_0}}\right), \\
B_r({\mathbf{0}}_n) \cap \{\xi_n > 0\} \subset B_r({\mathbf{0}}_n) \cap \Omega \subset B_r({\mathbf{0}}_n) \cap \left\{\xi_n >  -\displaystyle{\frac{2 r\epsilon_0}{1-\epsilon_0}}\right\},\end{cases}
\end{align}
where, we simply write $\{\xi_n > \tau\}$ instead of $\{\xi = (\xi_1, \xi_2,\cdots, \xi_n): \ \xi_n > \tau\}$ for ease of notation.

To the best of our knowledge, the Reifenberg flat domain is a natural generalization of a Lipschitz domain with a small Lipschitz constant. Its boundary is non-smooth, but flat in Reifenberg's sense: it can be well-trapped between two hyperplanes, depending on the chosen scaling parameter (small enough). Some known examples of Reifenberg flat domains are $C^1$ domains, $\epsilon$-Lipschitz domains with sufficiently small $\epsilon>0$, domains with fractal boundaries, etc. Byun-Wang in~\cite{BW2004, BW2008} first employed the Reifenberg flatness to obtain the up-to-boundary regularity of elliptic equations, and this approach inspired several subsequent developments in global regularity properties of solutions to boundary value problems. This condition is often referred to as the \emph{minimal geometric requirement} on the domain to guarantee significant mathematical properties to hold, such as topological manifold structure and various analytical results, despite the non-smoothness of the boundary in nature. Further, as observed in~\cite{MT2010, LMS2014}, the Reifenberg flat domain is only applicable when the scaling parameter $\epsilon_0$ is less than a certain threshold (typically $1/8$ or a small constant depending on the dimension $n$). In the literature, one often assumes $\epsilon_0 \in (0,1/8)$, where the specific constant $1/8$ is derived from technical proofs in geometric measure theory ensuring basic topological regularity of the domain.

\textbf{(A3) Bounded mean oscillation (BMO) assumption on $\mathfrak{a}$.} To obtain higher regularity, we further impose an additional structural assumption on the coefficient $\mathfrak{a}$. Namely, we require $\mathfrak{a}$ to have a sufficiently small bounded mean oscillation, which means that it belongs to BMO space with a small BMO semi-norm. It is worth noting that this requirement is strictly weaker than the vanishing mean oscillation (VMO) condition, since the class of functions with small BMO semi-norms contains VMO as a proper subspace. To be more precise, we shall assume that the BMO semi-norm of $\mathfrak{a}$ is small, i.e.
\begin{align}\label{eq:BMO}
[\mathfrak{a}]^{r_0}_{\mathrm{BMO}} \le \delta_{\mathrm{BMO}},
\end{align}
for some constant $\delta_{\mathrm{BMO}}>0$ depending on $N, n, \gamma_1, \gamma_2, p, c_1, c_2$. Here, the BMO semi-norm of $\mathfrak{a}$ is defined by
\begin{align}\notag
[\mathfrak{a}]^{r_0}_{\mathrm{BMO}} := \sup_{y \in \mathbb{R}^n,\, 0 < \xi \le r_0} \left( \fint_{B_\xi(y)} |\mathfrak{a}(x) - \overline{\mathfrak{a}}_{B_\xi(y)}| \, dx\right),
\end{align}
for some $r_0>0$, where $\overline{\mathfrak{a}}_{B_\xi(y)}$ denotes the averaged integral of $\mathfrak{a}$ over $B_{\xi}(y)$, that is
$$\overline{\mathfrak{a}}_{B_\xi(y)} = \fint_{B_{\xi}(y)} \mathfrak{a}(x) dx = \fint_{B_{\xi}(y)} \mathrm{Re}(\mathfrak{a}(x)) dx + i \fint_{B_{\xi}(y)} \mathrm{Im}(\mathfrak{a}(x)) dx.$$

\textbf{Notation.} Before going on, let us clarify a few notations and conventions that will be adopted throughout this paper. First, for the vectorial case, $\mathbf{g}: \Omega \to \mathbb{C}^N$, i.e. $\mathbf{g}=(g_1,g_2,\dots,g_N)$, the ``gradient'' of $\mathbf{g}$, often denoted by $\mathrm{D}\mathbf{g}$, represents the Jacobian matrix of $\mathbf{g}$. It is a tensor of size $N \times n$, defined as
\begin{align}\label{def-Dg}
\mathrm{D}\mathbf{g} = \left[\frac{\partial g_k}{\partial x_j}\right]_{1 \le k \le N \ \text{and} \ 1 \le j \le n},
\end{align}
where $\displaystyle{\frac{\partial g_k}{\partial x_j}}: \Omega \to \mathbb{C}$ is a scalar complex-valued function, is understood in the weak sense, defined by
\begin{align*}
\frac{\partial g_k}{\partial x_j} = \frac{\partial \mathrm{Re}(g_k)}{\partial x_j} + i \frac{\partial \mathrm{Im}(g_k)}{\partial x_j}, \quad k \in \{1,2,\cdots,N\},\ j \in \{1,2,\cdots,n\}.
\end{align*}

Throughout the paper, we employ the letter $C$ to denote a general positive constant that may vary from line to line, with peculiar dependence on prescribed parameters. The dependencies will often be omitted within the series of estimates and specified thereafter. In what follows, we simply write $B_R(x_0)$ in place of an open ball in $\mathbb{R}^n$ centered at $x_0$ with radius $R>0$. When the center is clear from the context, and no ambiguity arises, we shall leave out the center and just write $B_R$. Furthermore, we use the notation $\Omega_R$ to implicitly mean the intersection $\Omega \cap B_{R}$, a portion of the ball $B_R$ in $\Omega$.  Besides, $|\mathcal{V}|$ denotes the Lebesgue measure of a measurable set $\mathcal{V} \subset \mathbb{R}^n$. In what follows, we denote by $\mathcal{M}eas(\Omega;\mathbb{K}^{N \times n})$ the space of all Lebesgue measurable functions $\mathbf{f}: \Omega \to \mathbb{K}^{N \times n}$. For the sake of brevity, given any function $\mathbf{f} \in \mathcal{M}eas(\Omega;\mathbb{K}^{N \times n})$ and a constant $\lambda>0$, we shall write the level set $\{x \in \Omega: \, |\mathbf{f}(x)|>\lambda\}$ simply by $\{|\mathbf{f}|>\lambda\}$.  Moreover, for any subset $\mathcal{V} \subset \Omega$ with finite positive measure and any integrable map $\phi: \mathcal{V} \to \mathbb{K}$, the integral mean of $\phi$ over $\mathcal{V}$ will be denoted by
\begin{align*}
\overline{\phi}_{\mathcal{V}} := \fint_{\mathcal{V}} \phi(x) dx = \frac{1}{|\mathcal{V}|} \int_{\mathcal{V}} \phi(x) dx.
\end{align*}
Also note here that if $\phi: \mathcal{V} \to \mathbb{C}$ is an integrable function, its integral over $\mathcal{V}$ is defined component-wise by
\begin{align*}
\int_{\mathcal{V}} \phi(x) dx = \int_{\mathcal{V}} \mathrm{Re}(\phi(x)) dx + i \int_{\mathcal{V}} \mathrm{Im}(\phi(x)) dx.
\end{align*}
Moreover, for notational simplicity, the \emph{structural data} from the initial setting of the problem will be collected in a set of relevant parameters, and deliberately not repeated in every statement, defining the set as follows
\begin{align*}
\texttt{SD} := \{n,N,p,\mu,c_1,c_2,\gamma_0,\gamma_1,\gamma_2, r_0/\mathrm{diam}(\Omega)\}.
\end{align*}

Under the assumptions aforementioned, let us start with the definition of weak solution to~\eqref{eq-main}. It is worth mentioning here that the assumption (A1) we imposed on the structural data is natural and sufficient to ensure the existence and uniqueness of weak solutions (stated in Theorem~\ref{theo:EU}). Of independent interest, global regularity estimates for the weak solutions in target function spaces will be deduced by employing the additional assumptions (A2) and (A3), see Theorem~\ref{theo-CZ} and~\ref{theo-MorreyM}.

\begin{definition}[Weak solution]
We say that a function $\mathbf{u} \in W_0^{1,p}(\Omega;\mathbb{C}^{N})$ is a weak  solution to~\eqref{eq-main} under assumption (A1) if the following variational formulation
\begin{align}\label{weak-solution}
\int_{\Omega} \mathfrak{a}(x) \langle \mathbb{V}_{p}^{\mu}(\mathrm{D}\mathbf{u}), \mathrm{D}\varphi \rangle dx
= \int_{\Omega} \langle |\mathbf{F}|^{p-2}\mathbf{F} , \mathrm{D}\varphi \rangle dx,
\end{align}
is valid for every test function $\varphi \in W^{1,p}_0(\Omega;\mathbb{C}^{N})$.
\end{definition}
Next, we shall introduce the definition of fractional maximal operators and their truncated versions, which play a significant role in the remainder of this work.

\begin{definition}[Fractional maximal operators]
\label{def:M_beta}
For a given $\beta \in [0, n)$, the maximal fractional operator, usually written by $\mathbf{M}_\beta$, is a measurable map defined on $L^1_{\mathrm{loc}}(\mathbb{R}^n)$ as follows
\begin{align}\label{Malpha}
\mathbf{M}_\beta {f}(x) = \sup_{\rho>0}{\left(\rho^{\beta} \fint_{B_\rho(x)} |{f}(z)| dz\right)}, 
\end{align}
for all $x \in \mathbb{R}^n$ and ${f} \in L^1_{\mathrm{loc}}(\mathbb{R}^n)$. Associated with $\mathbf{M}_\beta$, we further introduce two truncated fractional maximal operators of order $R>0$, defined by
\begin{align*}
\mathbf{M}_\beta^R f(x) = \sup_{0<\rho<R}{\left(\rho^\beta \fint_{B_\rho(x)}{|f(z)| dz}\right)}, \quad \mathbf{T}_\beta^R f(x) = \sup_{\rho\ge R}{\left(\rho^\beta \fint_{B_\rho(x)}{|f(z)| dz}\right)}.
\end{align*}
For each measurable set $\mathcal{V} \subset \mathbb{R}^n$, the localized counterparts of the fractional maximal operators are denoted by
$$\mathbf{M}_{\beta,\mathcal{V}} f := \mathbf{M}_{\beta} (\chi_\mathcal{V} f), \ \mathbf{M}_{\beta,\mathcal{V}}^R f := \mathbf{M}_\beta^R (\chi_\mathcal{V} f),  \mbox{ and } \ \mathbf{T}_{\beta,\mathcal{V}}^R f := \mathbf{T}_\beta^R (\chi_\mathcal{V} f), $$ 
where $\chi_\mathcal{V}$ presents the indicator function of $\mathcal{V}$. When $\mathcal{V} \equiv \Omega$, we simply write $\mathbf{M}_{\beta}$ instead of $\mathbf{M}_{\beta,\Omega}$. 
\end{definition}

\textbf{Main results.} Let us now present our main results in three theorems. The first result, Theorem~\ref{theo:EU}, states the existence and uniqueness of solutions to~\eqref{eq-main} in the weak sense. Motivated by the recent results in~\cite[Theorem 5.2, and 5.3]{KV25} under the structural conditions~\eqref{eq:cona_KV}, we prefer to highlight that in our setting,  the complex-valued function $\mathfrak{a}$ is assumed to satisfy~\eqref{elip-cond} and~\eqref{new-cond}. Secondly, we aim to derive the gradient norm estimates for weak solutions. We present two types of regularity results for solutions to~\eqref{weak-solution}, via Theorem~\ref{theo-CZ} and Theorem~\ref{theo-MorreyM}, respectively. Of particular interest, these results naturally admit extensions of Theorem~\ref{theo-CZ} in different directions, leading to new gradient estimates in a variety of function spaces, although not addressed in the present paper. 

\begin{theorem}[Existence and uniqueness]
\label{theo:EU}
Let $p>1$, $n \ge 2$, $N \ge 1$ and $\mu \in [0,1]$. Suppose that the right-hand side source term $\mathbf{F} \in L^p(\Omega;\mathbb{C}^{N \times n})$ and complex-valued coefficient $\mathfrak{a}$ satisfies assumption (A1). Then, the system~\eqref{eq-main} admits a unique weak solution $\mathbf{u} \in W_0^{1,p}(\Omega;\mathbb{C}^{N})$.
\end{theorem}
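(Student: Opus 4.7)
The plan is to reformulate the complex weak equation~\eqref{weak-solution} as a single monotone operator equation on the real Banach space $X:=W^{1,p}_0(\Omega;\mathbb{C}^{N})$ (regarded as $W^{1,p}_0(\Omega;\mathbb{R}^{2N})$ under $\mathbf{u}\leftrightarrow(\mathrm{Re}\,\mathbf{u},\mathrm{Im}\,\mathbf{u})$), and to invoke the Browder-Minty theorem. To this end, I would introduce the operator $\mathcal{A}:X\to X^{*}$ and the functional $\mathcal{L}\in X^{*}$ by
$$\langle \mathcal{A}\mathbf{u},\varphi\rangle_{X^*,X}:=\mathrm{Re}\int_{\Omega}\mathfrak{a}(x)\,\langle\mathbb{V}_p^{\mu}(\mathrm{D}\mathbf{u}),\mathrm{D}\varphi\rangle\,dx,\qquad \mathcal{L}(\varphi):=\mathrm{Re}\int_{\Omega}\langle|\mathbf{F}|^{p-2}\mathbf{F},\mathrm{D}\varphi\rangle\,dx.$$
The first observation is that~\eqref{weak-solution} is equivalent to the real-valued equation $\mathcal{A}\mathbf{u}=\mathcal{L}$ in $X^{*}$: replacing $\varphi$ by $i\varphi$ recovers the imaginary part of the identity.

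Granting this reformulation, what remains is to verify the four hypotheses of Browder-Minty for $\mathcal{A}$. Boundedness of $\mathcal{A}$ and of $\mathcal{L}$ is an immediate consequence of the $(p-1)$-growth of $\mathbb{V}_p^{\mu}$ together with H\"older's inequality, while hemicontinuity of $t\mapsto\langle\mathcal{A}(\mathbf{u}+t\mathbf{v}),\mathbf{w}\rangle$ follows from the continuity of $\mathbb{V}_p^{\mu}$ and dominated convergence. The two decisive properties, strict monotonicity and coercivity, both rest on the pointwise inequality
$$\mathrm{Re}\bigl(\mathfrak{a}(x)\,\langle\mathbb{V}_p^{\mu}(y)-\mathbb{V}_p^{\mu}(z),\,y-z\rangle\bigr)\ge C_{0}(\mu^2+|y|^2+|z|^2)^{\frac{p-2}{2}}|y-z|^2,\qquad y,z\in\mathbb{C}^{N\times n},$$
with $C_{0}=C_{0}(\gamma_0,c_1,c_2)>0$, which is the content of the already-referenced Lemma~\ref{lem:Re-Im}. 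Integrating it gives $\langle\mathcal{A}\mathbf{u}-\mathcal{A}\mathbf{v},\mathbf{u}-\mathbf{v}\rangle\ge 0$, with the inequality strict whenever $\mathrm{D}\mathbf{u}\neq\mathrm{D}\mathbf{v}$ on a set of positive measure, and Poincar\'e's inequality then upgrades this to $\mathbf{u}=\mathbf{v}$ in $X$. Setting $z=0$ in the same inequality yields, for $p\ge 2$, $\langle\mathcal{A}\mathbf{u},\mathbf{u}\rangle\ge C\|\mathrm{D}\mathbf{u}\|_{L^{p}}^{p}$; for $1<p<2$, splitting $\Omega$ according to $|\mathrm{D}\mathbf{u}|\gtrless\mu$ and absorbing the low-gradient piece into a $\mu$-dependent constant leaves $\langle\mathcal{A}\mathbf{u},\mathbf{u}\rangle\ge C\|\mathrm{D}\mathbf{u}\|_{L^{p}}^{p}-C\mu^{p}|\Omega|$, which is still coercive as $\|\mathbf{u}\|_{X}\to\infty$ after one application of Poincar\'e.

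The main obstacle is the pointwise lower bound encoded in Lemma~\ref{lem:Re-Im}: it is precisely where the sectorial confinement~\eqref{new-cond} of $\mathfrak{a}$ is used to dominate the dispersive imaginary contribution of the coefficient against its accretive real part, so that the complex-valued problem acquires the accretivity needed for real monotone operator theory to apply. Once that lemma is available, the remaining argument is a routine application of Browder-Minty, from which both existence and uniqueness of the weak solution $\mathbf{u}\in W^{1,p}_0(\Omega;\mathbb{C}^{N})$ follow, with uniqueness being an immediate consequence of the strict monotonicity just established.
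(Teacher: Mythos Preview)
Your proposal is correct and follows essentially the same approach as the paper: both reduce the problem to monotone operator theory (the paper cites Lax--Milgram and monotone operator methods, you invoke Browder--Minty), with the decisive pointwise accretivity bound coming from Lemma~\ref{lem:Re-Im} combined with the sectorial assumption~\eqref{new-cond}. Your argument is in fact more explicit than the paper's own proof, which spells out uniqueness along the same lines and then defers existence to a reference to~\cite{KV25}.
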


The next result specifies the gradient estimates for a weak solution in the form of Calder\'on-Zygmund type. 
\begin{theorem}[Calder\'on-Zygmund estimate]
\label{theo-CZ}
Let $u \in W_0^{1,p}(\Omega;\mathbb{C}^{N})$ be a weak solution to~\eqref{eq-main} under assumption (A1). Then, for any $\beta \in[0,n)$ and $q > 1$, there exists a sufficiently small constant 
\begin{align*}
\delta_{\mathrm{BMO}}:= \delta_{\mathrm{BMO}}(q,\beta,\texttt{SD})>0
\end{align*}
such that if $\Omega$ is $(\delta_{\mathrm{BMO}},r_0)$-Reifenberg flat in the sense of assumption (A2) for some $0<r_0<\mathrm{diam}(\Omega)$, and if the function $\mathfrak{a}$ additionally satisfies~\eqref{eq:BMO} with the pair $(\delta_{\mathrm{BMO}},r_0)$ (assumption (A3)), the following quantitative estimate holds:\begin{align}\label{CZ-Mbeta-ineq}
\|\mathbf{M}_{\beta}(|\mathrm{D}\mathbf{u}|^p)\|_{L^q(\Omega;\mathbb{R})} \le C \left(\|\mathbf{M}_{\beta}(|\mathbf{F}|^p)\|_{L^q(\Omega;\mathbb{R})} + \mu^p\right).
\end{align}
Moreover, for every $q > p$, it holds
\begin{align}\label{CZ-ineq}
\|\mathrm{D}\mathbf{u}\|_{L^q(\Omega;\mathbb{C}^{N \times n})} \le C \left(\|\mathbf{F}\|_{L^q(\Omega;\mathbb{C}^{N \times n})} + \mu\right),
\end{align}
for a constant $C \equiv C(q,\texttt{SD})>0$.
\end{theorem}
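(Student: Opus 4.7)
The plan is to adopt the classical Calder\'on-Zygmund program via comparison estimates and a maximal-function / good-$\lambda$ argument, in the spirit of Caffarelli-Peral and its Byun-Wang extension to Reifenberg flat domains. Two features of our setting demand extra care: (i) the complex-valued coefficient forces us to replace real monotonicity by the strong accretivity encoded in assumption~\eqref{new-cond} and the Re-Im lemma (Lemma~\ref{lem:Re-Im}); and (ii) the mere smallness of $[\mathfrak{a}]_{\mathrm{BMO}}^{r_0}$ (as opposed to VMO) means the coefficient-freezing error must be absorbed quantitatively, typically with a Gehring-type higher integrability gain.

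First, for a ball $B_\rho(x_0) \subset \Omega$ in the interior case, or a Reifenberg flat boundary ball in the boundary case, I would set up a two-step comparison. Compare $\mathbf{u}$ to the solution $\mathbf{v}$ of the homogeneous equation with the same coefficient $\mathfrak{a}$ that agrees with $\mathbf{u}$ on $\partial B_\rho$; testing with $\mathbf{u}-\mathbf{v}$ and invoking Lemma~\ref{lem:Re-Im} yields
\begin{align*}
\fint_{B_\rho} |\mathrm{D}(\mathbf{u}-\mathbf{v})|^p \, dx \le C \fint_{B_\rho} \bigl(|\mathbf{F}|^p + \mu^p\bigr) \, dx.
\end{align*}
Then compare $\mathbf{v}$ to the solution $\mathbf{w}$ of the analogous problem with $\mathfrak{a}$ replaced by the constant $\overline{\mathfrak{a}}_{B_\rho}$; a further energy test, combined with a self-improving higher integrability for $|\mathrm{D}\mathbf{v}|$ and H\"older's inequality applied to the BMO defect $\mathfrak{a}-\overline{\mathfrak{a}}_{B_\rho}$, produces
\begin{align*}
\fint_{B_\rho} |\mathrm{D}(\mathbf{v}-\mathbf{w})|^p \, dx \le C \bigl([\mathfrak{a}]_{\mathrm{BMO}}^{r_0}\bigr)^\sigma \fint_{B_\rho} \bigl(|\mathrm{D}\mathbf{u}|^p + |\mathbf{F}|^p + \mu^p\bigr) \, dx
\end{align*}
for some $\sigma = \sigma(\texttt{SD}) > 0$. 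Because $\overline{\mathfrak{a}}_{B_\rho}$ is a complex constant, splitting $\mathbf{w}$ into real and imaginary parts reduces the equation for $\mathbf{w}$ to a frozen-coefficient homogeneous $p$-Laplace system, for which the theory of Uhlenbeck, DiBenedetto, and Lieberman (extended to the Reifenberg flat boundary by Byun-Wang) furnishes the Lipschitz-type bound $\|\mathrm{D}\mathbf{w}\|_{L^\infty(B_{\rho/2})}^p \le C \fint_{B_\rho} |\mathrm{D}\mathbf{w}|^p \, dx$.

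Armed with the decomposition $\mathrm{D}\mathbf{u} = \mathrm{D}(\mathbf{u}-\mathbf{v}) + \mathrm{D}(\mathbf{v}-\mathbf{w}) + \mathrm{D}\mathbf{w}$, I would carry out a Vitali covering / stopping-time argument on the upper level sets of $\mathbf{M}_\beta(|\mathrm{D}\mathbf{u}|^p)$, exactly where the fractional maximal exponent $\beta$ is absorbed by adding the factor $\rho^\beta$ inside the density computation. The outcome is the good-$\lambda$ inequality: for every $\epsilon > 0$ there exist $N = N(\texttt{SD})$ large and $\delta_{\mathrm{BMO}} = \delta_{\mathrm{BMO}}(\epsilon, \beta, q, \texttt{SD})$ small so that
\begin{align*}
\bigl|\{\mathbf{M}_\beta(|\mathrm{D}\mathbf{u}|^p) > N\lambda\} \cap \Omega\bigr| \le \epsilon \bigl|\{\mathbf{M}_\beta(|\mathrm{D}\mathbf{u}|^p) > \lambda\} \cap \Omega\bigr| + \bigl|\{\mathbf{M}_\beta(|\mathbf{F}|^p) > \epsilon\lambda\} \cap \Omega\bigr|
\end{align*}
for all $\lambda > \lambda_0$, where $\lambda_0$ absorbs the $\mu^p$ and global $L^p$-data contributions. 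Multiplying by $q\lambda^{q-1}$, integrating in $\lambda$, and choosing $\epsilon$ small enough to absorb the first term on the right yields estimate~\eqref{CZ-Mbeta-ineq}. Estimate~\eqref{CZ-ineq} then follows by applying~\eqref{CZ-Mbeta-ineq} with $\beta = 0$ and exponent $q/p > 1$ in place of $q$, together with the pointwise bound $|\mathrm{D}\mathbf{u}|^p \le \mathbf{M}_0(|\mathrm{D}\mathbf{u}|^p)$ and the Hardy-Littlewood maximal inequality on $L^{q/p}$.

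The principal obstacle is the coefficient-freezing step: the complex-valued structure prevents a direct use of real monotonicity, and recovering the quantitative bound in terms of $([\mathfrak{a}]_{\mathrm{BMO}}^{r_0})^\sigma$ requires carefully recasting the usual Lipschitz-truncation / higher-integrability toolkit (reverse H\"older, Gehring, Cacciopoli) in a way that is compatible with the sectorial accretivity from~\eqref{new-cond}. A secondary technical point is to verify that the Gehring exponent, and hence $\sigma$, depends only on $\texttt{SD}$, so that $\delta_{\mathrm{BMO}}$ can be chosen uniformly once $q$ and $\beta$ are fixed; once this is done, the Reifenberg flatness enters the boundary density estimate in the standard way, enabling the comparison to be carried out in a half-ball where the reference $L^\infty$ gradient bound for $\mathbf{w}$ is available up to the flat part.
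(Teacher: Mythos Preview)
Your proposal is correct and follows essentially the same route as the paper: a two-step comparison (first to the homogeneous problem with variable $\mathfrak{a}$, then freezing the coefficient and invoking Uhlenbeck's Lipschitz bound for the resulting real $p$-Laplace system), reverse H\"older for the intermediate solution to absorb the BMO defect with a power $\sigma>0$, and finally a good-$\lambda$/Vitali level-set argument on $\mathbf{M}_\beta(|\mathrm{D}\mathbf{u}|^p)$ integrated against $q\lambda^{q-1}$. The only cosmetic differences are that the paper freezes at $\mathfrak{a}(x_0)$ rather than at the average, swaps your labels $\mathbf{v}\leftrightarrow\mathbf{w}$, and writes the good-$\lambda$ inequality in the equivalent form $\lambda$ vs.\ $\sigma\lambda$ instead of $N\lambda$ vs.\ $\lambda$; also, for $1<p<2$ the first comparison estimate actually carries an extra small term $\delta\fint|\mathrm{D}\mathbf{u}|^p$ (cf.\ the paper's \eqref{3.4a}), which is harmless and absorbed exactly like the BMO error.
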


Extending the $L^q$-regularity of $\mathrm{D}\mathbf{u}$, in the next theorem, we derive a global Morrey-type bound for the gradient in terms of maximal operators. Definition of Morrey spaces, denoted by $\mathcal{M}^{q,s}$, due to Mingione in~\cite{Min2010}, will be provided in Section~\ref{sec:pre}.
\begin{theorem}[Morrey regularity]
\label{theo-MorreyM}
Let $u \in W_0^{1,p}(\Omega;\mathbb{C}^{N})$ be a weak solution to~\eqref{eq-main} under assumption (A1). Then, for every $1 < q < s < \infty$, there exists a sufficiently small constant $\delta_{\mathrm{BMO}}:= \delta_{\mathrm{BMO}}(q,s,\beta,\texttt{SD})>0$ such that if $\Omega$ is $(\delta_{\mathrm{BMO}},r_0)$-Reifenberg flat in the sense of assumption (A2) for some $0<r_0<\mathrm{diam}(\Omega)$, and if the coefficient $\mathfrak{a}$ additionally satisfies~\eqref{eq:BMO} with the pair $(\delta_{\mathrm{BMO}},r_0)$ (assumption (A3)), it holds:
\begin{align}\label{Morrey-ineq}
\|\mathbf{M}(|\mathrm{D}\mathbf{u}|^p)\|_{\mathcal{M}^{q,s}(\Omega;\mathbb{R})} \le C \left(\|\mathbf{M}(|\mathbf{F}|^p)\|_{\mathcal{M}^{q,s}(\Omega;\mathbb{R})} + \mu\right).
\end{align}
Here, $C$ is a positive constant depending on $q,s$, and structural data $\texttt{SD}$.
\end{theorem}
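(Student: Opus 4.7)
The plan is to derive the Morrey bound~\eqref{Morrey-ineq} by localizing the good-$\lambda$ mechanism that underlies the $L^q$-estimate in Theorem~\ref{theo-CZ}. Specifically, I expect the proof of~\eqref{CZ-Mbeta-ineq} to rest on a good-$\lambda$ inequality of the shape
\begin{align}\label{good-lambda-plan}
\bigl|\{\mathbf{M}(|\mathrm{D}\mathbf{u}|^p)>A\lambda,\ \mathbf{M}(|\mathbf{F}|^p)\le \eta\lambda\}\cap B_\rho(x_0)\bigr| \le C\eta^{\sigma}\bigl|\{\mathbf{M}(|\mathrm{D}\mathbf{u}|^p)>\lambda\}\cap B_{\kappa\rho}(x_0)\bigr|,
\end{align}
valid for every $x_0\in\overline{\Omega}$ and every $\rho$ below a threshold dictated by $r_0$; here $A>1$ and $\sigma>0$ are structural, while $\kappa\ge 1$ records the enlargement produced by the underlying Vitali/Calder\'on--Zygmund covering, and the estimate forces $\delta_{\mathrm{BMO}}$ and the Reifenberg parameter in (A2)--(A3) to be sufficiently small. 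The content of the present proof is the passage from~\eqref{good-lambda-plan} to~\eqref{Morrey-ineq}.

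First I would multiply~\eqref{good-lambda-plan} by $q\lambda^{q-1}$, integrate over $\lambda\in(\lambda_0,\infty)$ with $\lambda_0$ calibrated to a localized mean of $\mathbf{M}(|\mathrm{D}\mathbf{u}|^p)$ on a slightly enlarged ball, and apply the layer-cake formula to obtain
\begin{align*}
\int_{B_\rho(x_0)\cap\Omega}\!\! \mathbf{M}(|\mathrm{D}\mathbf{u}|^p)^q\,dx \le C\eta^\sigma\!\!\int_{B_{\kappa\rho}(x_0)\cap\Omega}\!\!\mathbf{M}(|\mathrm{D}\mathbf{u}|^p)^q\,dx + C(\eta)\!\!\int_{B_{\kappa\rho}(x_0)\cap\Omega}\!\!\mathbf{M}(|\mathbf{F}|^p)^q\,dx + \mathrm{tail}(\rho),
\end{align*}
where the tail collects the contribution below $\lambda_0$ and is controlled, via $\mathbf{T}^{2\rho}$, by the Morrey norm of $\mathbf{M}(|\mathbf{F}|^p)$ plus $\mu^q$. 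Multiplying through by $\rho^{s-n}$, taking the supremum over admissible balls, and recalling the definition of $\mathcal{M}^{q,s}$, the enlargement $B_\rho\mapsto B_{\kappa\rho}$ costs only a fixed factor $\kappa^{n-s}$ on the right. Choosing $\eta$ so small that $C\kappa^{n-s}\eta^\sigma<\tfrac12$ and reabsorbing the $\mathcal{M}^{q,s}$-seminorm of $\mathbf{M}(|\mathrm{D}\mathbf{u}|^p)$ yields
\begin{align*}
\|\mathbf{M}(|\mathrm{D}\mathbf{u}|^p)\|_{\mathcal{M}^{q,s}(\Omega;\mathbb{R})} \le C\bigl(\|\mathbf{M}(|\mathbf{F}|^p)\|_{\mathcal{M}^{q,s}(\Omega;\mathbb{R})} + \mu\bigr),
\end{align*}
once the large-ball regime $\rho\gtrsim\mathrm{diam}(\Omega)$ is disposed of directly by Theorem~\ref{theo-CZ}.

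The main obstacle is the localized good-$\lambda$ inequality~\eqref{good-lambda-plan} itself. While its global version is essentially the engine driving Theorem~\ref{theo-CZ}, restricting both level sets to a fixed ball $B_\rho(x_0)$ forces the Vitali/comparison scheme to be performed on sub-balls that may cross $\partial\Omega$, and one must verify that the up-to-boundary comparison estimates provided by Reifenberg flatness and the BMO smallness of $\mathfrak{a}$ survive the geometric restriction with constants independent of $\rho$ and $x_0$. A secondary technical point is the correct calibration of the truncation level $\lambda_0$: it has to sit above the scale-invariant mean of the weak solution on $B_{\kappa\rho}$ (so that~\eqref{good-lambda-plan} is genuinely applicable), yet low enough that the sub-$\lambda_0$ contribution is absorbable into the right-hand side of~\eqref{Morrey-ineq}. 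I do not foresee genuinely new difficulties arising from the complex-valued structure, since the accretivity condition~\eqref{new-cond} and the BMO smallness~\eqref{eq:BMO} have already been absorbed at the level of~\eqref{good-lambda-plan} through the arguments behind Theorem~\ref{theo-CZ}.
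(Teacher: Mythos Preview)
Your route is genuinely different from the paper's. The paper never localizes the good-$\lambda$ inequality to balls. Instead it keeps the \emph{global} level-set estimate of Lemma~\ref{theo-LV} (with $\beta=0$) and upgrades it, via the $\mathcal{A}_\infty$ property~\eqref{Muck-w}, to the same inequality for $\omega$-measure with an arbitrary Muckenhoupt weight $\omega$; this yields a weighted $L^q_\omega$-bound~\eqref{Morrey-est-4} by the same layer-cake computation as in Theorem~\ref{theo-CZ}. The Morrey localization is then obtained by choosing the Coifman--Rochberg weight $\omega=(\mathbf{M}\chi_{B_\varrho(y)})^{\tau}\in\mathcal{A}_1$ for each fixed ball $B_\varrho(y)$, using $\chi_{B_\varrho(y)}\le\omega$ on the left, decomposing $\Omega$ into dyadic annuli $B_{2^{j+1}\varrho}\setminus B_{2^{j}\varrho}$ on the right, exploiting the decay $\omega\le 2^{-(j-1)n\tau}$ there, and summing the geometric series with $\tau<1-q/s$. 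In particular, the ``main obstacle'' you flag---re-running the Vitali/comparison scheme on sub-balls with radii controlled by $\rho$---simply does not arise.

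Your sketch, by contrast, has a gap at the tail step. If $\lambda_0$ is calibrated to a local mean of $\mathbf{M}(|\mathrm{D}\mathbf{u}|^p)$ on $B_{\kappa\rho}$ (as it must be, to force the Vitali radii below $\rho$), then the sub-$\lambda_0$ contribution is of order $\lambda_0^{\,q}|B_\rho|$, which by H\"older is at most $\kappa^{-n}\int_{B_{\kappa\rho}}\mathbf{M}(|\mathrm{D}\mathbf{u}|^p)^q$. This involves the solution, not the data; the claim that it is ``controlled, via $\mathbf{T}^{2\rho}$, by the Morrey norm of $\mathbf{M}(|\mathbf{F}|^p)$ plus $\mu^q$'' presupposes exactly the Morrey regularity you are trying to prove. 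The tail therefore has to be reabsorbed together with the good-$\lambda$ term, and after Morrey normalization the combined factor in front of the solution is $(C\eta^\sigma+\kappa^{-n})\,\kappa^{\,n(1-q/s)}$. Making this smaller than $1$ is possible, but it requires choosing the enlargement scale $\kappa$ depending on $q,s$ and then recalibrating $\lambda_0$ accordingly---more bookkeeping than your plan suggests. The weighted route in the paper sidesteps all of this.
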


The remaining part of the paper is organized as follows. In Section~\ref{sec:pre}, we recall some known definitions, functional setting and state a few preliminary results that will be needed throughout the paper. Next, in Section~\ref {sec:exist}, we exploit the existence and uniqueness of weak solutions in \eqref{weak-solution}. This section sets forth the proof of Theorem~\ref{theo:EU}, whereas some discussion and conclusions on the main problem are addressed. Section~\ref{sec:comp_est} is devoted to introducing homogeneous problems and establishing comparison maps between the solution $\mathbf{u}$ of the original problem and the solution $\mathbf{v}$ of the associated homogeneous ones. In this section, we also outline the main ideas and key ingredients employed in the proofs of our results, and highlight some technical difficulties. For ease of exposition, the technical proof is split into several lemmas towards proofs of the main theorems. Finally, Section~\ref{sec:main} is the most substantial, as it contains the proofs of the main theorems, along with some remarks that are concerning both the model and methodology. More precisely, we focus on the decay estimates for solutions to~\eqref{weak-solution}, which play the key role in reaching Lebesgue- and Morrey-space regularity. 

\section{Preliminaries and function spaces}
\label{sec:pre}

In this section, we first provide a concise and informative overview of the main notions and definitions over the complex field $\mathbb{C}$ that are important for our work. As usual, ones often express the generic field as $\mathbb{K}$, which is specified to be either the field of real numbers, $\mathbb{R}$, or of complex numbers, $\mathbb{C}$; and the notation $i$ represents the imaginary unit, defined as the square root of $-1$. In the sequel, when dealing with the elliptic system for vector-valued map $\mathbf{u}$, we shall adopt the standard convention of using typeface for scalar functions and boldface for vector-valued functions. 

Let $n \ge 2$ and $N \ge 1$ be two natural numbers. For any matrix $\mathbf{Z} \in \mathbb{K}^{N \times n}$, we mean that $\mathbf{Z} = (\mathbf{Z}^1, \mathbf{Z}^2, \dots, \mathbf{Z}^N)^T$, where each $\mathbf{Z}^k \in \mathbb{K}^n$ denotes the $k^{\mathrm{th}}$ row of $\mathbf{Z}$, for all $k \in \{1,2,\cdots,N\}$.  Here and in what follows, for any $z \in \mathbb{C}$, the real and imaginary part are denoted by $\mathrm{Re}(z)$ and $\mathrm{Im}(z)$, respectively, so that $z = \mathrm{Re}(z) + i\mathrm{Im}(z)$. Analogously, for any vector $\mathbf{z} = (z_1,z_2,\dots,z_n) \in \mathbb{C}^n$, we shall define its real and imaginary parts component-wise as
\begin{align*}
\mathrm{Re}(\mathbf{z}) = (\mathrm{Re}(z_1),\mathrm{Re}(z_2),\dots,\mathrm{Re}(z_n)) \mbox{ and } \mathrm{Im}(\mathbf{z}) = (\mathrm{Im}(z_1),\mathrm{Im}(z_2),\dots,\mathrm{Im}(z_n)) \in \mathbb{R}^n.
\end{align*}
In a similar fashion, for any matrix $\mathbf{Z} \in \mathbb{C}^{N \times n}$ with its rows $\mathbf{Z}^1, \mathbf{Z}^2,\dots,\mathbf{Z}^N$, the real and imaginary parts are also defined component-wise $\mathrm{Re}(\mathbf{Z})$ and $\mathrm{Im}(\mathbf{Z}) \in \mathbb{R}^{N\times n}$. Besides, to become clear in the remainder of this study, for each matrix $\mathbf{Z} \in \mathbb{C}^{N \times n}$, we introduce the following two real-valued matrices
\begin{align}\label{eq:Z}
\widehat{\mathbf{Z}} = \begin{bmatrix}
\mathrm{Re}(\mathbf{Z}) \\ \mathrm{Im}(\mathbf{Z})
\end{bmatrix}, \quad \mbox{ and } \quad  \overset{\sim}{\mathbf{Z}} = \begin{bmatrix} -\mathrm{Im}(\mathbf{Z}) \\ \mathrm{Re}(\mathbf{Z})) \end{bmatrix} \in \mathbb{R}^{2N \times n}.
\end{align}

Also, let us recall the inner product defined on $\mathbb{R}^{2N \times n}$ as follows. For any matrices $\mathbf{D} = (\mathbf{D}^1, \mathbf{D}^2, \dots, \mathbf{D}^{2N})^T$ and $\mathbf{E} = (\mathbf{E}^1, \mathbf{E}^2, \dots, \mathbf{E}^{2N})^T \in \mathbb{R}^{2N \times n}$, the canonical inner product will be defined by
\begin{align*}
\mathbf{D} : \mathbf{E} = \sum_{k=1}^{2N} \mathbf{D}^k \cdot \mathbf{E}^k,
\end{align*}
where the notation $\cdot$ on the right-hand side presents the standard inner product in $\mathbb{R}^n$. Moreover, this inner product naturally induces the associated norm 
\begin{align*}
|\mathbf{E}|_{\mathbb{R}^{2N \times n}} =\sqrt{\mathbf{E} : \mathbf{E}}= \left(\sum_{k=1}^{2N} |\mathbf{E}^k|^2\right)^{\frac{1}{2}},
\end{align*}
where $|\mathbf{E}^k|$ denotes the standard Euclidean norm of the $k^{\mathrm{th}}$ row in $\mathbb{R}^n$. 

By the above definition of inner product and~\eqref{eq:Z}, it is clear that $\widehat{\mathbf{Z}} : \overset{\sim}{\mathbf{Z}} = 0$, for every $\mathbf{Z} \in \mathbb{C}^{N \times n}$. In virtue of these notations, the complex inner product on $\mathbb{C}^{N \times n}$ can also be defined as
\begin{align}\label{inner-C}
\langle \mathbf{Y}, \mathbf{Z} \rangle = \widehat{\mathbf{Y}} : \widehat{\mathbf{Z}} + i (\widehat{\mathbf{Y}} : \overset{\sim}{\mathbf{Z}}),
\end{align}
where $:$ denotes the standard inner product in $\mathbb{R}^{2N \times n}$. And the associated norm is then given by
\begin{align*}
|\mathbf{Z}|_{\mathbb{C}^{N \times n}} = \left( |\mathrm{Re}(\mathbf{Z})|^2 + |\mathrm{Im}(\mathbf{Z})|^2 \right)^{\frac{1}{2}},
\end{align*}
Moreover, it is easy to verify that $|\mathbf{Z}|_{\mathbb{C}^{N \times n}} = |\widehat{\mathbf{Z}}|_{\mathbb{R}^{2N \times n}} = |\overset{\sim}{\mathbf{Z}}|_{\mathbb{R}^{2N \times n}}$, and when no confusion may arise, we drop the subscripts and denote all such norms simply by $|\mathbf{Z}|$.\\[5pt]

\textbf{Lebesgue space.} Given $1\le q \le \infty$, a measurable map $\mathbf{g} \in \mathcal{M}eas(\Omega;\mathbb{K}^{N \times n})$ is said to belong to the Lebesgue space $L^q(\Omega; \mathbb{K}^{N \times n})$ if and only if
\begin{align*} 
\|\mathbf{g}\|_{L^q(\Omega; \mathbb{K}^{N \times n})} := \left(\int_{\Omega} |\mathbf{g}(x)|^q dx \right)^{\frac{1}{q}} < \infty, \quad \text{when} \ q<\infty,
\end{align*}
and if
\begin{align*}
\|\mathbf{g}\|_{L^\infty(\Omega; \mathbb{K}^{N \times n})}:=\operatorname*{ess\,sup}_{x\in\Omega} |\mathbf{g}(x)| < \infty,\quad \text{when} \ q=\infty.
\end{align*}
Here, we recall that $|\mathbf{g}|$ denotes the norm in $\mathbb{K}^N$ defined above. \\[5pt]

\textbf{Sobolev spaces.} Given $q \ge 1$, the Sobolev space $W^{1,q}(\Omega;\mathbb{K}^{N})$ is defined as the subspace of $L^q(\Omega;\mathbb{K}^{N})$ made up all the functions $\mathbf{g} \in L^q(\Omega; \mathbb{K}^{N})$ such that whose distributional gradients, $\mathrm{D}\mathbf{g}$ belong to $L^q(\Omega; \mathbb{K}^{N \times n})$. This space is endowed with the following norm:
\begin{align*} 
\|\mathbf{g}\|_{W^{1,q}(\Omega; \mathbb{K}^N)} := \left( \int_\Omega {(|\mathbf{g}(x)|^q + |\mathrm{D}\mathbf{g}(x)|^q) dx} \right)^{\frac{1}{q}} < \infty, 
\end{align*}
where $|\mathrm{D}\mathbf{g}|$ denotes the aforementioned norm in $\mathbb{K}^{N \times n}$. Here, the gradient $\mathrm{D}\mathbf{g}$ is understood in the weak sense as in~\eqref{def-Dg}, which means
\begin{align*}
\mathrm{D}\mathbf{g} = \mathrm{D}(\mathrm{Re}(\mathbf{g})) + i \mathrm{D}(\mathrm{Im}(\mathbf{g})).
\end{align*} 
Moreover, we also emphasize the closure of $C_0^{\infty}(\Omega;\mathbb{K}^{N})$ in $W^{1,q}(\Omega;\mathbb{K}^{N})$ will be denoted by $W^{1,q}_{0}(\Omega;\mathbb{K}^{N})$.\\[5pt]

\textbf{Muckenhoupt class.} For every $1 < s < \infty$, we denote by $\mathcal{A}_s$, which is the set of all functions $\omega \in L^1_{\mathrm{loc}}(\mathbb{R}^n;\mathbb{R}^+)$ such that
\begin{align*}
[\omega]_{\mathcal{A}_{s}} := \displaystyle \sup_{B \subset \mathbb{R}^n} \left(\fint_{B} \omega(x) dx\right) \left(\fint_{B} \omega(x)^{-\frac{1}{s-1}} dx\right)^{s-1} < \infty.
\end{align*}
For the case $s = 1$, we say that $\omega \in \mathcal{A}_{1}$ if there exists a constant $C>0$ such that
\begin{align*}
\mathbf{M}\omega(x)= \sup_{\varrho>0} \fint_{B_{\varrho}(x)} \omega(\zeta) d\zeta \le C \omega(x), \quad \text{for a.e. }\  x \in \mathbb{R}^n,
\end{align*}
where $\mathbf{M}$ stands for the Hardy-Littlewood maximal operator. In this case, $[\omega]_{\mathcal{A}_{1}}$ is defined as the infimum of all such values $C$ for which the above inequality holds. Moreover, we define the Muckenhoupt class $\mathcal{A}_\infty$ as the union of $\mathcal{A}_s$ classes for all $s \ge 1$, namely,
$$\mathcal{A}_{\infty} := {\bigcup_{s \ge 1} \mathcal{A}_{s}}.$$  

From the above definitions, one has the inclusion  $\mathcal{A}_{1} \subset \mathcal{A}_{s} \subset \mathcal{A}_{\infty}$, for any $1<s<\infty$. We also recall the known power-type doubling property as follows: if $\omega \in \mathcal{A}_{\infty}$, then there exist constants $C>0$ and $\nu>0$ depending only on $n$ and $[\omega]_{\mathcal{A}_\infty}$ such that
\begin{align}\label{Muck-w}
\int_{V}{\omega(x)dx} \le C \left(\frac{|V|}{|B|}\right)^{\nu} \int_{B}{\omega(x)dx}, 
\end{align}
for every ball $B \subset \mathbb{R}^n$ and any measurable subset $V \subset B$. For simplicity, we often use the notation $\omega(V):=\int_{V}\omega(x)dx$. Moreover, if $\omega \in \mathcal{A}_{\infty}$, we shall refer to the pair of constants $(C,\nu)$ from~\eqref{Muck-w} as the ``$\mathcal{A}_{\infty}$-characteristic'' associated with $\omega$, and for brevity, we denote it by $[\omega]_{\mathcal{A}_{\infty}}=(C,\nu)$.\\[5pt]

\textbf{Weighted Lebesgue spaces.} Based on the definition of Muckenhoupt weights given above, we now introduce the definition of weighted Lebesgue spaces. Let $q \in [1,\infty)$ and $\omega \in \mathcal{A}_{\infty}$. The weighted Lebesgue space $L^{q}_{\omega}(\Omega;\mathbb{K}^{N \times n})$ is defined as follows
\begin{align*}
L^{q}_{\omega}(\Omega;\mathbb{K}^{N \times n}) :=\left\{\mathbf{g} \in \mathcal{M}eas(\Omega;\mathbb{K}^{N \times n}): \ \|\mathbf{g}\|_{L^{q}_{\omega}(\Omega;\mathbb{K}^{N \times n})}<\infty\right\},
\end{align*}
where the norm $\|\mathbf{g}\|_{L^{q}_{\omega}(\Omega;\mathbb{K}^{N \times n})}$ is given by
\begin{align}\label{w-Lor-norm}
\|\mathbf{g}\|_{L^{q}_{\omega}(\Omega;\mathbb{K}^{N \times n})} :=  \left(\int_{\Omega} |\mathbf{g}(x)|^q \omega(x) dx\right)^{\frac{1}{q}}.
\end{align}
\\[5pt]
\textbf{Morrey spaces.} Let $1 \le q \le s < \infty$. The Morrey space, denoted by $\mathcal{M}^{q,s}(\Omega;\mathbb{K}^{N \times n})$, is defined as the following set
\begin{align*}
\mathcal{M}^{q,s}(\Omega;\mathbb{K}^{N \times n}) := \left\{\mathbf{g} \in L^q(\Omega;\mathbb{K}^{N \times n}): \ \|\mathbf{g}\|_{\mathcal{M}^{q,s}(\Omega;\mathbb{K}^{N \times n})}<\infty\right\},
\end{align*}
where the norm $\|\mathbf{g}\|_{\mathcal{M}^{q,s}(\Omega;\mathbb{K}^{N \times n})}$ is given by
\begin{align}\label{Morrey-norm}
\|\mathbf{g}\|_{\mathcal{M}^{q,s}(\Omega;\mathbb{K}^{N \times n})} :=  \sup_{y\in \Omega; \, 0<r<\mathrm{diam}(\Omega)} \left(\frac{1}{|B_r(y)|^{1-\frac{q}{s}}}\int_{\Omega_r(y)}|\mathbf{g}(x)|^{q}dx\right)^{\frac{1}{q}}.
\end{align}
It is also clear that when $q=s$, the Morrey space concides with the Lebesgue space, that is, $\mathcal{M}^{q,q}(\Omega; \mathbb{K}^{N \times n}) \equiv L^q(\Omega; \mathbb{K}^{N \times n})$.\\[5pt]

\textbf{Boundedness of fractional maximal operators in Lebesgue spaces.} For the convenience of the reader, we lay out a fundamental property for our later development, the boundedness property of $\mathbf{M}_{\beta}$, for given $\beta \in [0,n)$.  
\begin{proposition}
\label{lem:bound-M-beta}
Let $\beta \in \left[0,n\right)$ and $\mathcal{V}$ be a measurable subset of $\mathbb{R}^n$. Then, there exists a constant $C=C(n,\beta)>0$ such that for any $\lambda>0$ and $f \in L^{1}(\mathcal{V})$, one has
\begin{align}\label{M-beta-est-1}
\left|\left\{x \in \mathbb{R}^n: \ \mathbf{M}_{\beta,\mathcal{V}}f(x)>\lambda\right\}\right| \le C \left(\lambda^{-1}\int_{\mathcal{V}}|f(x)| dx\right)^{\frac{n}{n-\beta}}.
\end{align}
\end{proposition}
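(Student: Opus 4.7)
The approach is the classical Vitali covering argument adapted to the fractional scale, which reduces the weak-type bound to summing volumes of disjoint balls selected from the defining supremum of $\mathbf{M}_\beta$.

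\textbf{Step 1: Selecting radii.} Set $E_\lambda := \{x \in \mathbb{R}^n : \mathbf{M}_{\beta,\mathcal{V}} f(x) > \lambda\}$. For each $x \in E_\lambda$, the sup-definition of $\mathbf{M}_{\beta,\mathcal{V}} f$ furnishes some $r_x > 0$ with
\begin{align*}
r_x^\beta \fint_{B_{r_x}(x)} |\chi_{\mathcal V}(z) f(z)| \, dz > \lambda,
\quad \text{equivalently} \quad
\omega_n \, r_x^{n-\beta} < \lambda^{-1} \int_{B_{r_x}(x) \cap \mathcal{V}} |f(z)|\, dz,
\end{align*}
where $\omega_n := |B_1|$. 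In particular, since the right-hand side is bounded by $\lambda^{-1}\|f\|_{L^1(\mathcal{V})}$, the collection $\{r_x\}_{x\in E_\lambda}$ is uniformly bounded, which is the standing hypothesis for applying the Vitali covering lemma.

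\textbf{Step 2: Disjointification via Vitali.} The $5r$-covering lemma provides a countable disjoint sub-collection $\{B_{r_{x_i}}(x_i)\}_{i \in I}$ so that $E_\lambda \subset \bigcup_{i \in I} B_{5 r_{x_i}}(x_i)$. Hence
\begin{align*}
|E_\lambda| \le 5^n \sum_{i \in I} |B_{r_{x_i}}(x_i)|
\le 5^n \omega_n \sum_{i \in I} r_{x_i}^n.
\end{align*}

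\textbf{Step 3: Converting radii to $L^1$-mass.} From the inequality in Step 1, I would raise to the power $n/(n-\beta) \ge 1$ to obtain
\begin{align*}
\omega_n^{\frac{n}{n-\beta}} r_{x_i}^{n} \le \lambda^{-\frac{n}{n-\beta}} \left( \int_{B_{r_{x_i}}(x_i) \cap \mathcal{V}} |f(z)| \, dz \right)^{\frac{n}{n-\beta}}.
\end{align*}
Summing over $i$ and using the elementary inequality $\sum_i a_i^{q} \le \bigl(\sum_i a_i\bigr)^{q}$ for $a_i \ge 0$ and $q = n/(n-\beta) \ge 1$, together with the disjointness of the balls $B_{r_{x_i}}(x_i)$, I conclude
\begin{align*}
\omega_n^{\frac{n}{n-\beta}} \sum_{i \in I} r_{x_i}^n
\le \lambda^{-\frac{n}{n-\beta}} \left( \sum_{i \in I} \int_{B_{r_{x_i}}(x_i) \cap \mathcal{V}} |f(z)| \, dz \right)^{\frac{n}{n-\beta}}
\le \lambda^{-\frac{n}{n-\beta}} \|f\|_{L^1(\mathcal{V})}^{\frac{n}{n-\beta}}.
\end{align*}
Combining with Step 2 gives $|E_\lambda| \le C(n,\beta) \bigl( \lambda^{-1} \|f\|_{L^1(\mathcal{V})} \bigr)^{n/(n-\beta)}$, as required.

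There is no real obstacle here: the only point to watch is that the selected radii must be uniformly bounded so the Vitali lemma applies, which is automatic from the $L^1$-integrability of $f$ and the fact that $\beta < n$. The endpoint case $\beta = 0$ recovers the familiar weak-$(1,1)$ bound for the Hardy--Littlewood maximal operator.
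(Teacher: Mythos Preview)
Your proof is correct and self-contained, but it differs from the paper's argument. The paper does not redo the Vitali covering; instead it reduces to the classical weak-$(1,1)$ bound for the Hardy--Littlewood maximal operator via the pointwise Hedberg-type inequality
\[
\mathbf{M}_\beta \phi(x) \le |B_1|^{\beta/n}\,\bigl[\mathbf{M}\phi(x)\bigr]^{1-\beta/n}\,\|\phi\|_{L^1(\mathbb{R}^n)}^{\beta/n},
\]
applied to $\phi = \lambda^{-1}\chi_{\mathcal V}f$, so that $\{\mathbf{M}_\beta\phi>1\}$ is contained in a level set of $\mathbf{M}\phi$ at a height depending on $\|\phi\|_{L^1}$. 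Your route has the advantage of being elementary and not relying on the weak-$(1,1)$ bound as a black box; the paper's route is shorter and isolates a pointwise control of $\mathbf{M}_\beta$ by $\mathbf{M}$ that is useful in its own right. The only minor remark on your argument is that the Vitali $5r$-lemma, as usually stated, requires the family of balls to have uniformly bounded radii---you correctly note this and verify it from $\beta<n$ and $f\in L^1(\mathcal V)$.
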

\begin{proof}
For $\mathcal{V} \subset \mathbb{R}^n$ be a measurable subset, $\beta \in \left[0,n\right), \lambda>0$, and $f \in L^{1}(\mathcal{V})$. Let us define $\phi = \lambda^{-1}\chi_{\mathcal{V}} f$, which clearly belongs to $L^1(\mathbb{R}^n)$. It is well-known that the Hardy-Littlewood maximal operator $\mathbf{M}$ is of weak type $(1,1)$, that is, there exists a constant $C=C(n)>0$ such that
\begin{align}\label{M-beta-est-2}
\sup_{\tau>0} \tau\left|\left\{x \in \mathbb{R}^n: \ \mathbf{M}\phi(x)>\tau\right\}\right| \le C \int_{\mathbb{R}^n}|\phi(x)| dx.
\end{align}
For every $x \in \mathbb{R}^n$  and $\rho>0$, one has $|B_\rho(x)| = \rho^n |B_1|$, where $B_1$ denotes the unit ball in $\mathbb{R}^n$. Recalling the definition of $\mathbf{M}_\beta$ in~\eqref{Malpha}, we can write
\begin{align*}
\mathbf{M}_{\beta}\phi(x) 
& = |B_1|^{\frac{\beta}{n}} \sup_{\rho>0} \left(\fint_{B_\rho(x)} |\phi(y)| dy\right)^{1-\frac{\beta}{n}} \left(\int_{B_\rho(x)} |\phi(y)| dy\right)^{\frac{\beta}{n}},
\end{align*}
which implies that $\mathbf{M}_{\beta}\phi(x)  \le |B_1|^{\frac{\beta}{n}} \big[\mathbf{M}\phi(x)\big]^{1-\frac{\beta}{n}} \|\phi\|_{L^1(\mathbb{R}^n)}^{\frac{\beta}{n}}$. Moreover, this estimate immediately leads to
\begin{align}\label{M-beta-est-3}
\left|\left\{x \in \mathbb{R}^n: \ \mathbf{M}_{\beta}\phi(x)>1\right\}\right| 
& \le \left|\left\{x \in \mathbb{R}^n: \  \mathbf{M}\phi(x)  > |B_1|^{\frac{-\beta}{n-\beta}} \|\phi\|_{L^1(\mathbb{R}^n)}^{\frac{-\beta}{n-\beta}}\right\}\right|.
\end{align}
Applying~\eqref{M-beta-est-2} to the right-hand side of~\eqref{M-beta-est-3}, we arrive at
\begin{align}\notag 
\left|\left\{x \in \mathbb{R}^n: \ \mathbf{M}_{\beta}\phi(x)>1\right\}\right| \le C \left(\int_{\mathbb{R}^n}|\phi(x)| dx\right)^{\frac{n}{n-\beta}},
\end{align}
which concludes~\eqref{M-beta-est-1}. The proof is complete.
\end{proof}
\\[5pt]
\textbf{A Reifenberg-type Covering Lemma.} The following density lemma is a consequence of a Calder\'on-Zygmund type covering argument developed in~\cite{CC1995, CP1998}. For a comprehensive account, the interested reader may refer to~\cite{AM2007,BP2014} and the bibliography therein. 
\begin{lemma}\label{lem:Vitali}
Suppose that $\Omega$ is a $(\delta_0, r_0)$-Reifenberg domain with $0<\delta_0 < \frac{1}{8}$ and $r_0>0$. Let $\mathcal{V}_1$ and $\mathcal{V}_2$ be two measurable subsets of $\Omega$ such that $|\mathcal{V}_1| \le \varepsilon |B_{R_0}|$ for some $R_0 \in (0, r_0]$ and a fixed $\varepsilon \in (0,1)$. Assume further that for every $x_0 \in \Omega$ and $0< \rho \le R_0$ the following condition holds
\begin{align}\label{lem:V-a2}
\Omega_\rho(x_0) \not\subset \mathcal{V}_2 \Longrightarrow |B_\rho(x_0)\cap \mathcal{V}_1| < \varepsilon |B_\rho(x_0)|.
\end{align}
Then, there exists a constant $C = C(n)>0$ such that $|\mathcal{V}_1| \leq C\varepsilon |\mathcal{V}_2|$.
\end{lemma}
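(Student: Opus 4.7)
The plan is to run a stopping-time/Vitali covering argument in which the factor $\varepsilon$ on the right-hand side is produced by the fact that, at a ``stopping radius'' forced by the global smallness $|\mathcal{V}_1| \le \varepsilon |B_{R_0}|$, the density of $\mathcal{V}_1$ is exactly $\varepsilon$ and is strictly below $\varepsilon$ on any slightly larger concentric ball. The contrapositive of the implication in~\eqref{lem:V-a2} then turns every covering ball into a piece of $\mathcal{V}_2$, and Reifenberg flatness provides the uniform lower bound $|\Omega_\rho(x)| \gtrsim |B_\rho(x)|$ that converts the sum of Euclidean ball measures into the final $\mathcal{V}_2$-measure.

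Concretely, for almost every $x \in \mathcal{V}_1$ the Lebesgue differentiation theorem gives $\lim_{\rho \to 0^+} |B_\rho(x)\cap \mathcal{V}_1|/|B_\rho(x)| = 1$, while the density at $\rho = R_0$ is bounded by $|\mathcal{V}_1|/|B_{R_0}| \le \varepsilon$. Using continuity in $\rho$, I would set
\[
\rho_x := \sup\bigl\{\rho \in (0,R_0] : |B_\rho(x)\cap \mathcal{V}_1| \ge \varepsilon |B_\rho(x)|\bigr\} \in (0,R_0],
\]
so that $|B_{\rho_x}(x)\cap \mathcal{V}_1| = \varepsilon |B_{\rho_x}(x)|$, while $|B_\rho(x)\cap \mathcal{V}_1| < \varepsilon |B_\rho(x)|$ for every $\rho \in (\rho_x,R_0]$; the contrapositive of~\eqref{lem:V-a2} at $\rho = \rho_x$ immediately yields $\Omega_{\rho_x}(x) \subset \mathcal{V}_2$. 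From the family $\{B_{\rho_x}(x)\}_{x}$ I would then extract, via the classical $5r$-Vitali lemma, a countable pairwise disjoint subfamily $\{B_{\rho_i}(x_i)\}_{i}$ whose $5$-dilates cover $\mathcal{V}_1$ up to a null set.

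The estimate then splits into two regimes. When $5\rho_i \le R_0$, the strict density drop just above the stopping radius gives $|\mathcal{V}_1 \cap B_{5\rho_i}(x_i)| < 5^n \varepsilon\,|B_{\rho_i}(x_i)|$; combining this with the Reifenberg lower density $|\Omega_{\rho_i}(x_i)| \ge c(n)\,|B_{\rho_i}(x_i)|$ (valid for $\rho_i \le r_0$ and $\delta_0 < 1/8$) and the inclusion $\Omega_{\rho_i}(x_i) \subset \mathcal{V}_2$ produces
\[
|\mathcal{V}_1 \cap B_{5\rho_i}(x_i)| \le C(n)\,\varepsilon\,|\mathcal{V}_2 \cap B_{\rho_i}(x_i)|,
\]
and summation over the disjoint family of $B_{\rho_i}(x_i)$ gives the conclusion. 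When instead $5\rho_i > R_0$, a single ball $B_{\rho_i}(x_i)$ already has measure comparable to $|B_{R_0}|$, and the same Reifenberg bound combined with $|\mathcal{V}_1| \le \varepsilon\,|B_{R_0}|$ directly forces $|\mathcal{V}_1| \le C(n)\,\varepsilon\,|\mathcal{V}_2|$. The main technical point I expect is the uniform interior density estimate $|\Omega_\rho(x)| \ge c(n)\,|B_\rho(x)|$ for every $x \in \mathcal{V}_1 \subset \Omega$ and $\rho \le R_0$, which is handled by separating $x$ deep inside $\Omega$ (where $B_\rho(x) \subset \Omega$) from $x$ near $\partial\Omega$ (where the flatness at a nearby boundary point, together with the constraint $\delta_0 < 1/8$, yields the bound); everything else reduces to routine stopping-time/Vitali bookkeeping.
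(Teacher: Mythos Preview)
Your proposal is correct and follows essentially the same stopping-time/Vitali strategy as the paper: choose a maximal radius $\rho_x$ where the density of $\mathcal{V}_1$ equals $\varepsilon$, invoke the contrapositive of~\eqref{lem:V-a2} to get $\Omega_{\rho_x}(x)\subset\mathcal{V}_2$, extract a disjoint Vitali subfamily, and close with the Reifenberg lower-density bound $|\Omega_\rho(x)|\ge c(n)|B_\rho(x)|$. The only difference is that your case split on $5\rho_i\le R_0$ versus $5\rho_i>R_0$ is unnecessary, since the global hypothesis $|\mathcal{V}_1|\le\varepsilon|B_{R_0}|$ already forces $|B_\rho(x)\cap\mathcal{V}_1|<\varepsilon|B_\rho(x)|$ for \emph{all} $\rho>\rho_x$ (including $\rho>R_0$), which is how the paper handles it in one stroke.
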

\begin{proof}
Firstly, let us show that for any $x \in \mathcal{V}_1$, there exists $\rho_{x} \in (0, R_0)$ such that 
\begin{align}\label{Vita-1}
|B_{\rho_x}(x)\cap \mathcal{V}_1| = \varepsilon |B_{\rho_x}(x)|, \mbox{ and } |B_{\rho}(x)\cap \mathcal{V}_1| < \varepsilon |B_{\rho}(x)|, \mbox{ for all } \rho>\rho_x.
\end{align}
To this end, we consider the continuous function $\psi_x: (0,\infty) \to [0,1]$ defined by
\begin{align*}
\psi_{x}(\rho) = \frac{|B_{\rho}(x)\cap \mathcal{V}_1|}{|B_{\rho}(x)|}, \quad \rho >0.
\end{align*}
At this stage, it is easy to verify that $\displaystyle{\lim_{\rho\to 0^+}}\psi_{x}(\rho) = 1$ and $\psi_{x}(R_0) < \varepsilon$, since $|\mathcal{V}_1| \le \varepsilon |B_{R_0}|$. Therefore, it is possible to find $\rho_x \in (0,R_0)$ such that $\psi_{x}\left(\rho_{x}\right) = \varepsilon$ and $\psi_{x}(\rho)< \varepsilon$ for all $\rho > \rho_{x}$, which yields~\eqref{Vita-1}. 

By the virtue of Vitali covering lemma, since $\{B_{\rho_{x}}(x)\cap \mathcal{V}_1\}_{x \in \mathcal{V}_1}$ is an open cover of $\mathcal{V}_1$, there exists a countable subfamily of pairwise disjoint balls $\{B_{\rho_{x_k}}(x_k) : x_k \in \mathcal{V}_1\}_{k \in \mathbb{N}}$ such that
\begin{align}\notag
\mathcal{V}_1 = \bigcup_{k \in \mathbb{N}} \big[ B_{5\rho_{x_k}}(x_k) \cap \mathcal{V}_1\big].
\end{align}
Combining this identity with~\eqref{Vita-1}, it follows that
\begin{align}\label{Vita-2}
|\mathcal{V}_1| \le \sum_{k \in \mathbb{N}} \big| B_{5\rho_{x_k}}(x_k) \cap \mathcal{V}_1\big| < 5^n \varepsilon  \sum_{k \in \mathbb{N}} \big| B_{\rho_{x_k}}(x_k)\big| \le 15^n \varepsilon  \sum_{k \in \mathbb{N}} \big| B_{\rho_{x_k}}(x_k) \cap \Omega\big|,
\end{align}
where, the last inequality in~\eqref{Vita-2} comes from the fact that
\begin{align}\label{Vita-3}
|B_{\rho}(x)| \le 3^n |B_{\rho}(x) \cap \Omega|, \mbox{ for all } x \in \Omega \mbox{ and } \rho \in (0,r_0],
\end{align}
due to the assumption that $\Omega$ is $(\delta_0,r_0)$-Reifenberg flatness for some $\delta_0 < \frac{1}{8}$. Indeed, inequality~\eqref{Vita-3} is obviously satisfied if $B_{\rho}(x) \subset \Omega$. On the contrary, there exists $y \in \partial \Omega$ such that $|x-y| = \text{dist}(x,\partial \Omega) < \rho$. Since $\Omega$ is $(\delta_0,r_0)$-Reifenberg flat domain, it enables us to find a new coordinate system $\{\xi_1, \xi_2,\cdots,\xi_n\}$ such that the origin coincides with $x$, i.e. ${\mathbf{0}}_n \equiv x$ and
\begin{align}\notag 
B_{\rho}(x) \cap \left\{\xi_n > \delta\rho\right\} \subset B_{\rho}(x) \cap \Omega \subset B_{\rho}(x) \cap \left\{\xi_n >  -\delta\rho\right\}, \quad \mbox{ with } \delta = \frac{\delta_0}{1-\delta_0}.
\end{align}
This leads to~\eqref{Vita-3} via the following estimate
\begin{align}\notag
\frac{|B_{\rho}(x)|}{|B_{\rho}(x) \cap \Omega|} \le \frac{|B_{\rho}(x)|}{|B_{\rho}(x) \cap \left\{\xi_n > \delta\rho\right\}|} \le \frac{|B_{\rho}(x)|}{\left|B_{\frac{\rho-\delta\rho}{2}}\right|} = \left(\frac{2}{1-\delta}\right)^n < 3^n.  
\end{align}

On the other hand, thanks to the assumption~\eqref{lem:V-a2}, we arrive at
\begin{align}\label{Vita-4}
\bigcup_{k \in \mathbb{N}} \big[ B_{\rho_{x_k}}(x_k) \cap \Omega \big] \subset \mathcal{V}_2.
\end{align}
Combining~\eqref{Vita-2} with~\eqref{Vita-4} and the disjoint covering $\{B_{\rho_{x_k}}(x_k)\}_{k \in \mathbb{N}}$, it yields
\begin{align}\notag
|\mathcal{V}_1| \le 15^n \varepsilon  \left|\bigcup_{k \in \mathbb{N}} \big[ B_{\rho_{x_k}}(x_k) \cap \Omega\big] \right| \le 15^n \varepsilon |\mathcal{V}_2|.
\end{align}
The proof is now complete.
\end{proof}

\section{Existence and uniqueness}
\label{sec:exist}

This section is first devoted to establishing the strongly accretive and sectorial properties of the operator $\mathbb{V}_{p}^{\mu}$ in complex spaces under our main assumptions. These properties are key tools for our proof of Theorem \ref{theo:EU}. Although being inspired by the ideas put forward in \cite{KV25}, it requires more delicate arguments for our setting. 

The following lemma describes the standard properties of the operator $\mathbb{V}_{p}^{\mu}$ regarding the initial condition~\eqref{new-cond} of the complex coefficient $\mathfrak{a}$. 

\begin{lemma}\label{lem:Re-Im}
Let $p>1$, $\mu \in [0,1]$, and $\mathbb{V}_{p}^{\mu}$ be the operator defined in~\eqref{def-Vps}. Assume that $c_1,c_2>0$ are two positive constants characterizing the ellipticity in~\eqref{basic-ineq}. Then, for all $\eta_1, \eta_2 \in \mathbb{C}^{N \times n}$, the real and imaginary parts of $\mathbb{V}_{p}^{\mu}$ are bounded as follows
\begin{align}\label{Re}
\mathrm{Re}\left(\langle \mathbb{V}_{p}^{\mu}(\eta_1) - \mathbb{V}_{p}^{\mu}(\eta_2), \eta_1 - \eta_2 \rangle\right) \ge c_1 \left(\mu^2 + |\eta_1|^2 + |\eta_2|^2\right)^{\frac{p-2}{2}}|\eta_1 - \eta_2|^2,
\end{align}
and
\begin{align}\label{Im}
\left|\mathrm{Im}\left(\langle \mathbb{V}_{p}^{\mu}(\eta_1) - \mathbb{V}_{p}^{\mu}(\eta_2), \eta_1 - \eta_2 \rangle\right)\right| \le \sqrt{c_2^2-c_1^2} \left(\mu^2 + |\eta_1|^2 + |\eta_2|^2\right)^{\frac{p-2}{2}}|\eta_1 - \eta_2|^2,
\end{align}
\end{lemma}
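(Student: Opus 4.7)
My plan is to transfer both inequalities to the real setting of $\mathbb{R}^{2N\times n}$ using the representations $\widehat{\cdot}$ and $\widetilde{\cdot}$ from~\eqref{eq:Z}, and then to invoke~\eqref{basic-ineq}. The crucial observation is that, since the scalar $(\mu^2+|\eta|^2)^{(p-2)/2}$ in~\eqref{def-Vps} is real and $|\eta|=|\widehat{\eta}|=|\widetilde{\eta}|$, one has
\[
\widehat{\mathbb{V}_{p}^{\mu}(\eta)} = \mathbb{V}_{p}^{\mu}(\widehat{\eta}),\qquad \widetilde{\mathbb{V}_{p}^{\mu}(\eta)} = \mathbb{V}_{p}^{\mu}(\widetilde{\eta}).
\]
Combined with the inner-product formula~\eqref{inner-C}, this translates the complex quantity into two real bilinear expressions in $\mathbb{R}^{2N\times n}$: setting $X:=\mathbb{V}_{p}^{\mu}(\widehat{\eta_1})-\mathbb{V}_{p}^{\mu}(\widehat{\eta_2})$, $Y:=\widehat{\eta_1}-\widehat{\eta_2}$, and $Z:=\widetilde{\eta_1}-\widetilde{\eta_2}$, I read off $\mathrm{Re}\langle \mathbb{V}_{p}^{\mu}(\eta_1)-\mathbb{V}_{p}^{\mu}(\eta_2),\eta_1-\eta_2\rangle = X:Y$ and $\mathrm{Im}\langle \mathbb{V}_{p}^{\mu}(\eta_1)-\mathbb{V}_{p}^{\mu}(\eta_2),\eta_1-\eta_2\rangle = X:Z$.

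\textbf{Execution.} The real part~\eqref{Re} is then immediate: since $|\widehat{\eta_i}|=|\eta_i|$ and $|Y|=|\eta_1-\eta_2|$, the lower bound in~\eqref{basic-ineq} applied with $y=\widehat{\eta_1}$, $z=\widehat{\eta_2}$ yields~\eqref{Re} directly. For the imaginary part~\eqref{Im}, I would exploit two elementary identities that follow by direct component-wise computation from~\eqref{eq:Z}: first, $|Z|=|Y|=|\eta_1-\eta_2|$, and second, $Y:Z=0$ (the cross-terms involving $\mathrm{Re}(\eta_j):\mathrm{Im}(\eta_k)$ cancel pairwise after unfolding). Decomposing $X=X_\parallel+X_\perp$ into its components parallel and orthogonal to $Y$ in $\mathbb{R}^{2N\times n}$, the orthogonality $Y\perp Z$ yields $X:Z = X_\perp:Z$, and the Cauchy–Schwarz inequality then gives $|X:Z|\le |X_\perp|\,|\eta_1-\eta_2|$. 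Via the Pythagorean identity $|X|^2=|X_\parallel|^2+|X_\perp|^2$, the task reduces to bounding $|X_\perp|^2$ from above. Setting $M:=\mu^2+|\eta_1|^2+|\eta_2|^2$, the lower bound in~\eqref{basic-ineq} gives $|X_\parallel| = (X:Y)/|Y| \ge c_1\, M^{(p-2)/2}|\eta_1-\eta_2|$, while the companion Lipschitz-type bound yields $|X|\le c_2\, M^{(p-2)/2}|\eta_1-\eta_2|$. Subtracting, $|X_\perp|^2 \le (c_2^2-c_1^2)\,M^{p-2}|\eta_1-\eta_2|^2$, which is precisely~\eqref{Im}.

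\textbf{Main obstacle.} The principal subtlety is that~\eqref{basic-ineq} as stated only controls the quadratic form $(\mathbb{V}_{p}^{\mu}(y)-\mathbb{V}_{p}^{\mu}(z)):(y-z)$, whereas the argument above also uses the vector-norm estimate $|\mathbb{V}_{p}^{\mu}(y)-\mathbb{V}_{p}^{\mu}(z)|\le c_2\,(\mu^2+|y|^2+|z|^2)^{(p-2)/2}|y-z|$ with the same constant $c_2$. This companion bound is not a formal consequence of~\eqref{basic-ineq} on its own; however, it comes out of the same derivation (e.g.\ by writing $\mathbb{V}_{p}^{\mu}(y)-\mathbb{V}_{p}^{\mu}(z) = \int_0^1 D\mathbb{V}_{p}^{\mu}(z+t(y-z))(y-z)\,dt$ and estimating the integrand). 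I would either record this companion estimate alongside~\eqref{basic-ineq} in the preliminaries or interpret $c_2$ as the common best constant produced by the standard derivation of the basic inequalities for $\mathbb{V}_{p}^{\mu}$; either way, the orthogonal decomposition then delivers the sharp factor $\sqrt{c_2^2-c_1^2}$ appearing in~\eqref{Im}.
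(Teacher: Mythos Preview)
Your proof is correct and follows essentially the same route as the paper: both reduce to $\mathbb{R}^{2N\times n}$ via the hat/tilde identifications, obtain~\eqref{Re} directly from the lower bound in~\eqref{basic-ineq}, and then extract~\eqref{Im} from the orthogonality $\widehat{(\eta_1-\eta_2)}:\widetilde{(\eta_1-\eta_2)}=0$ together with the Lipschitz-type bound $|\mathbb{V}_p^\mu(y)-\mathbb{V}_p^\mu(z)|\le c_2(\mu^2+|y|^2+|z|^2)^{(p-2)/2}|y-z|$ (the paper writes this as $I^2+J^2\le |X|^2|Y|^2$ rather than via your parallel/perpendicular decomposition, but the linear algebra is the same). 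The subtlety you flag---that this vector-norm companion bound is not literally the upper inequality in~\eqref{basic-ineq}---is well observed and applies equally to the paper's own argument, which also invokes it tacitly.
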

\begin{proof}
First, for every $\eta_1, \eta_2 \in \mathbb{C}^{N \times n}$, our assumption on complex-valued operator $\mathbb{V}_{p}^{\mu}$ and the definition of inner product  in~\eqref{inner-C} imply that
\begin{align}\notag
I := \mathrm{Re}\left(\langle \mathbb{V}_{p}^{\mu}(\eta_1)
 - \mathbb{V}_{p}^{\mu}(\eta_2), \eta_1 - \eta_2 \rangle\right) & = \left(\mathbb{V}_{p}^{\mu}(\widehat{\eta_1}) - \mathbb{V}_{p}^{\mu}(\widehat{\eta_2})\right) : \left(\widehat{\eta_1}-\widehat{\eta_2}\right),
\end{align}
where $\widehat{\eta_1}, \widehat{\eta_2}$ are defined as in~\eqref{eq:Z} and $:$ denotes the canonical inner product on $\mathbb{R}^{2N \times n}$. \\
Owing to assumption~\eqref{basic-ineq}$_1$ and recalling that $|\widehat{\eta}| = |\eta|$, for any $\eta \in \mathbb{C}^{N \times n}$, we obtain
\begin{align*}
I \ge c_1 \left(\mu^2 + |\widehat{\eta_1}|^2 + |\widehat{\eta_2}|^2\right)^{\frac{p-2}{2}}|\widehat{\eta_1} - \widehat{\eta_2}|^2 = c_1 \left(\mu^2 + |\eta_1|^2 + |\eta_2|^2\right)^{\frac{p-2}{2}}|\eta_1 - \eta_2|^2,
\end{align*}
which yields~\eqref{Re}. On the other hand, we also have 
\begin{align}\notag
J := \mathrm{Im}\left(\langle \mathbb{V}_{p}^{\mu}(\eta_1)
 - \mathbb{V}_{p}^{\mu}(\eta_2), \eta_1 - \eta_2 \rangle\right) & = \left(\mathbb{V}_{p}^{\mu}(\widehat{\eta_1}) - \mathbb{V}_{p}^{\mu}(\widehat{\eta_2})\right) : \left(\overset{\sim}{\eta_1}-\overset{\sim}{\eta_2}\right).
\end{align}
Thanks to~\eqref{basic-ineq}$_2$, it follows that
\begin{align*}
I^2 + J^2 \le \left|\mathbb{V}_{p}^{\mu}(\widehat{\eta_1}) - \mathbb{V}_{p}^{\mu}(\widehat{\eta_2})\right|^2 \left|\widehat{\eta_1}-\widehat{\eta_2}\right|^2 \le c_2^2\left(\mu^2 + |\eta_1|^2 + |\eta_2|^2\right)^{\frac{p-2}{2}}|\eta_1 - \eta_2|^2.
\end{align*}
Combining this estimate and~\eqref{Re}, we arrive at~\eqref{Im}.
\end{proof}

\begin{remark}\label{rem:EU}
The assertion of Lemma~\ref{lem:Re-Im} implies that the complex-valued operator $\mathbb{V}_{p}^{\mu}$ is strongly accretive via its real part $\mathrm{Re}$ and satisfies a sector condition via its imaginary part $\mathrm{Im}$. Indeed, setting $z=\langle \mathbb{V}_{p}^{\mu}(\eta_1) - \mathbb{V}_{p}^{\mu}(\eta_2), \eta_1 - \eta_2 \rangle \in \mathbb{C}$, then it follows from~\eqref{Re} and~\eqref{Im} that
\begin{align*}
|\mathrm{Im}(z)| \le c_0 \mathrm{Re}(z).
\end{align*}
Geometrically, this estimate implies that the complex number $z$ lies in a sector of the complex plane with sector angle $\theta \in \left(0,\frac{\pi}{2} \right)$ such that $\tan\theta = c_0$. Therefore, the values of the operator $\mathbb{V}_{p}^{\mu}$ remain within a narrow sectorial region uniquely determined by $\theta$, ensuring that the deviation from the real axis is uniformly bounded. Hence, this property enables the complex-valued operator $\mathbb{V}_p^\mu$ to be treated using techniques analogous to those used for real-valued ones.

Let us now come to our discussion on the leading operator $\mathcal{L}_p\mathbf{u}:=-\mathrm{div}(\mathfrak{a}(\cdot)\mathbb{V}_p^\mu(\mathrm{D}\mathbf{u}))$ on the left-hand side of our main problem~\eqref{eq-main}. Combining the preceding estimates provided in~\eqref{Re} and~\eqref{Im} with the sectorial condition~\eqref{new-cond} on the coefficient $a(\cdot)$ leads  us to the following estimate
\begin{align}\label{eq:Lp}
\mathrm{Re}\langle \mathfrak{a}(x)\mathbb{V}_p^\mu(\eta), \eta \rangle \ge \gamma_0 |\mathbb{V}_p^\mu(\eta)| |\eta| \ge  C(c_0,\gamma_0) (\mu^2 + |\eta|^{2})^{\frac{p-2}{2}}|\eta|^2,
\end{align}
for almost everywhere $x \in \Omega$ and for all $\eta \in \mathbb{C}^{N \times n}$. Therefore, it is standard to see that although the operator $\mathcal{L}_p$ is not elliptic in the classical sense (as a real-valued operator), the estimate~\eqref{eq:Lp} ensures that it satisfies the strong monotonicity condition in the complex setting. This ensures that the structural conditions of the operator on the left-hand side of~\eqref{eq-main}, such as ellipticity and monotonicity, are preserved in the complex setting. And thus, the existence and uniqueness of weak solutions to our problem are guaranteed by standard monotonicity methods. 
\end{remark}

By applying the strongly accretive and sectorial properties of $\mathbb{V}_{p}^{\mu}$ in complex spaces in Lemma~\ref{lem:Re-Im}, we are now positioned to address the questions about existence and uniqueness of solutions to system \eqref{eq-main}.

\begin{proof}[Proof of Theorem~\ref{theo:EU}]
The proofs of existence and uniqueness follow an argument analogous to those of~\cite[Theorem 5.2, Theorem 5.3]{KV25}. The primary difference arises in a few technical estimates, where the assumption~\eqref{new-cond} is employed in place of~\eqref{eq:cona_KV}, which was assumed in~\cite{KV25}.
To prove the uniqueness, suppose that $\mathbf{u}$ and $\mathbf{v} \in W^{1,p}_0(\Omega;\mathbb{C}^{N})$ are both weak solutions satisfying~\eqref{weak-solution}. Subtracting the corresponding weak formulations and testing the resulting equation 
\begin{align}\label{unique-1}
\int_{\Omega} \mathfrak{a}(x) \langle \mathbb{V}_{p}^{\mu}(\mathrm{D}\mathbf{u}) - \mathbb{V}_{p}^{\mu}(\mathrm{D}\mathbf{v}), \mathrm{D}\mathbf{u} - \mathrm{D}\mathbf{v} \rangle dx = 0,
\end{align}
with the test function $\varphi= \mathbf{u}-\mathbf{v}$. In view of~\eqref{inner-C}, we can present
\begin{align*}
\langle \mathbb{V}_{p}^{\mu}(\mathrm{D}\mathbf{u}) - \mathbb{V}_{p}^{\mu}(\mathrm{D}\mathbf{v}), \mathrm{D}\mathbf{u} - \mathrm{D}\mathbf{v} \rangle & = \left(\mathbb{V}_{p}^{\mu}(\widehat{\mathrm{D}\mathbf{u}}) - \mathbb{V}_{p}^{\mu}(\widehat{\mathrm{D}\mathbf{v}})\right): \left(\widehat{\mathrm{D}\mathbf{u}} - \widehat{\mathrm{D}\mathbf{v}}\right) \\
& \qquad \qquad + i\left(\mathbb{V}_{p}^{\mu}(\widehat{\mathrm{D}\mathbf{u}}) - \mathbb{V}_{p}^{\mu}(\widehat{\mathrm{D}\mathbf{v}})\right): \left(\overset{\sim}{\mathrm{D}\mathbf{u}} - \overset{\sim}{\mathrm{D}\mathbf{v}}\right).
\end{align*}
Thus, thanks to Lemma~\ref{lem:Re-Im}, it allows us to estimate
\begin{align*}
\mathrm{Re} & \left(\mathfrak{a}(x) \langle \mathbb{V}_{p}^{\mu}(\mathrm{D}\mathbf{u}) - \mathbb{V}_{p}^{\mu}(\mathrm{D}\mathbf{v}), \mathrm{D}\mathbf{u} - \mathrm{D}\mathbf{v} \rangle\right) \\
 & \qquad = \mathrm{Re}(\mathfrak{a}(x)) \left(\mathbb{V}_{p}^{\mu}(\widehat{\mathrm{D}\mathbf{u}}) - \mathbb{V}_{p}^{\mu}(\widehat{\mathrm{D}\mathbf{v}})\right): \left(\widehat{\mathrm{D}\mathbf{u}} - \widehat{\mathrm{D}\mathbf{v}}\right) \\
& \qquad \qquad \qquad \qquad - \mathrm{Im}(\mathfrak{a}(x)) \left(\mathbb{V}_{p}^{\mu}(\widehat{\mathrm{D}\mathbf{u}}) - \mathbb{V}_{p}^{\mu}(\widehat{\mathrm{D}\mathbf{v}})\right): \left(\overset{\sim}{\mathrm{D}\mathbf{u}} - \overset{\sim}{\mathrm{D}\mathbf{v}}\right) \\
& \qquad \ge \left(c_1 \mathrm{Re}(\mathfrak{a}(x)) - \sqrt{c_2^2-c_1^2} |\mathrm{Im}(\mathfrak{a}(x))|\right) \left(\mu^2 + |\mathrm{D}\mathbf{u}|^2 + |\mathrm{D}\mathbf{v}|^2\right)^{\frac{p-2}{2}}|\mathrm{D}\mathbf{u} - \mathrm{D}\mathbf{v}|^2,
\end{align*}
for a.e. $x \in \Omega$. Under assumption~\eqref{new-cond}, it leads to
\begin{align}\label{unique-2}
\mathrm{Re}  \left(\mathfrak{a}(x) \langle \mathbb{V}_{p}^{\mu}(\mathrm{D}\mathbf{u}) - \mathbb{V}_{p}^{\mu}(\mathrm{D}\mathbf{v}), \mathrm{D}\mathbf{u} - \mathrm{D}\mathbf{v} \rangle\right)  \ge c_1\gamma_0 \left(\mu^2 + |\mathrm{D}\mathbf{u}|^2 + |\mathrm{D}\mathbf{v}|^2\right)^{\frac{p-2}{2}}|\mathrm{D}\mathbf{u} - \mathrm{D}\mathbf{v}|^2.
\end{align}
Combining~\eqref{unique-1} and~\eqref{unique-2}, one may conclude 
\begin{align*}
0 & = \mathrm{Re}\left(\int_{\Omega} \mathfrak{a}(x) \langle \mathbb{V}_{p}^{\mu}(\mathrm{D}\mathbf{u}) - \mathbb{V}_{p}^{\mu}(\mathrm{D}\mathbf{v}), \mathrm{D}\mathbf{u} - \mathrm{D}\mathbf{v} \rangle dx\right) \\
& \qquad \qquad \qquad \qquad \ge c_1\gamma_0 \int_{\Omega} \left(\mu^2 + |\mathrm{D}\mathbf{u}|^2 + |\mathrm{D}\mathbf{v}|^2\right)^{\frac{p-2}{2}}|\mathrm{D}\mathbf{u} - \mathrm{D}\mathbf{v}|^2 dx,
\end{align*}
which ensures that $\mathbf{u} = \mathbf{v}$ almost everwhere in $\Omega$. The existence of a solution is established through a similar refinement via the use of the Lax-Milgram theorem and the monotone operator theory, under the present assumption in~\eqref{new-cond}.
\end{proof}

\section{Homogeneous problems and comparison maps}
\label{sec:comp_est}

The approach to our global bounds for the gradient of weak solutions to system~\eqref{eq-main} relies on a precise level-set decay estimate, which is established through comparisons with the gradient of solutions to corresponding homogeneous systems. This section is devoted to deriving some technical results, which include basic regularity and a few preparatory results of comparison type. This comparison argument will play a crucial role in the next step regarding the subsequent analysis of level sets.

\begin{lemma}[Local comparison maps]
\label{lem-comp}
Let $\mathbf{u} \in W_0^{1,p}(\Omega;\mathbb{C}^{N})$ be a weak solution to~\eqref{eq-main} under assumption (A1). Given a point $x_0 \in \Omega$ and $0<\xi<r_0/4$, for some constant $0<r_0<\mathrm{diam}(\Omega)$. Then, there exists a sufficiently small constant $\delta_{\mathrm{BMO}} \in (0,1)$ such that if $\Omega$ is $(\delta_{\mathrm{BMO}},r_0)$-Reifenberg flat in the sense of assumption (A2), one can find a function $$\mathbf{v} \in W^{1,p}(\Omega_{2\xi}(x_0);\mathbb{C}^{N})$$
satisfying the interior gradient bound
\begin{align}\label{ReH-ineq}
\sup_{x \in \Omega_{\xi}(x_0)} |\mathrm{D}\mathbf{v}(x)| \le C \left(\fint_{\Omega_{2\xi}(x_0)}{(|\mathrm{D}\mathbf{v}|^p+ \mu^p) dx}\right)^{\frac{1}{p}}.
\end{align}
Moreover, there exist a constant $\vartheta \in (0,1)$ such that the following comparison estimate holds
\begin{align}\label{Comp-ineq}
\fint_{\Omega_{2\xi}(x_0)}|\mathrm{D}\mathbf{u} - \mathrm{D}\mathbf{v}|^pdx \le C\left[\left(\delta + \left([\mathfrak{a}]^{r_0}_{\mathrm{BMO}}\right)^{\vartheta}\right) \fint_{\Omega_{4\xi}(x_0)}|\mathrm{D}\mathbf{u}|^pdx + C_{\delta} \left(\fint_{\Omega_{4\xi}(x_0)}{(|\mathbf{F}|^p+ \mu^p) dx}\right)\right],
\end{align}
for every $\delta \in (0,\delta_{\mathrm{BMO}})$. Here, $C_{\delta}$ is a constant depending only on $\delta$ and structural data $\texttt{SD}$.
\end{lemma}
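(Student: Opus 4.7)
The plan is to construct the comparison map $\mathbf{v}$ through a two-step freezing procedure (first freezing the source term, then freezing the coefficient), and to combine the resulting monotonicity estimates with a Gehring-type higher integrability argument, a John--Nirenberg bound for BMO, and classical regularity for the homogeneous constant-coefficient frozen problem on a Reifenberg flat domain.

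First, I would introduce an intermediate auxiliary function $\mathbf{w} \in \mathbf{u} + W_0^{1,p}(\Omega_{4\xi}(x_0);\mathbb{C}^{N})$ as the unique weak solution (granted by Theorem~\ref{theo:EU} after reduction to zero trace) of the homogeneous system
\begin{align*}
-\mathrm{div}\left(\mathfrak{a}(x)\mathbb{V}_{p}^{\mu}(\mathrm{D}\mathbf{w})\right) = 0 \text{ in } \Omega_{4\xi}(x_0), \qquad \mathbf{w} = \mathbf{u} \text{ on } \partial\Omega_{4\xi}(x_0).
\end{align*}
Subtracting the weak formulations of $\mathbf{u}$ and $\mathbf{w}$, testing with $\mathbf{u}-\mathbf{w}$, and taking real parts, the lower bound of Remark~\ref{rem:EU} controls the left-hand side by $\int(\mu^2+|\mathrm{D}\mathbf{u}|^2+|\mathrm{D}\mathbf{w}|^2)^{(p-2)/2}|\mathrm{D}\mathbf{u}-\mathrm{D}\mathbf{w}|^2\,dx$, while H\"older and Young on the right-hand side produce (after treating the ranges $p\ge 2$ and $1<p<2$ separately via the standard $\mathbb{V}_{p}^{\mu}$ manipulations) the first-step bound $\fint_{\Omega_{4\xi}(x_0)}|\mathrm{D}\mathbf{u}-\mathrm{D}\mathbf{w}|^p\,dx \le C\fint_{\Omega_{4\xi}(x_0)}(|\mathbf{F}|^p+\mu^p)\,dx$, together with the energy bound $\fint_{\Omega_{4\xi}(x_0)}|\mathrm{D}\mathbf{w}|^p\,dx \le C\fint_{\Omega_{4\xi}(x_0)}(|\mathrm{D}\mathbf{u}|^p+|\mathbf{F}|^p+\mu^p)\,dx$.

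Second, I would freeze the coefficient to its average $\overline{\mathfrak{a}}_{B_{2\xi}(x_0)}$ and define $\mathbf{v}\in\mathbf{w}+W_0^{1,p}(\Omega_{2\xi}(x_0);\mathbb{C}^{N})$ as the weak solution of $-\mathrm{div}(\overline{\mathfrak{a}}_{B_{2\xi}(x_0)}\mathbb{V}_p^\mu(\mathrm{D}\mathbf{v}))=0$ with boundary value $\mathbf{w}$. Averaging the pointwise assumption \eqref{new-cond} shows that $\overline{\mathfrak{a}}_{B_{2\xi}(x_0)}$ still lies in the admissible sector, so Theorem~\ref{theo:EU} applies. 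The difference of the two weak formulations, tested against $\mathbf{w}-\mathbf{v}$, yields
\begin{align*}
\mathrm{Re}\!\int(\mathfrak{a}(x)-\overline{\mathfrak{a}})\langle\mathbb{V}_p^\mu(\mathrm{D}\mathbf{w}),\mathrm{D}(\mathbf{w}-\mathbf{v})\rangle\,dx = -\mathrm{Re}\!\int\overline{\mathfrak{a}}\langle\mathbb{V}_p^\mu(\mathrm{D}\mathbf{w})-\mathbb{V}_p^\mu(\mathrm{D}\mathbf{v}),\mathrm{D}(\mathbf{w}-\mathbf{v})\rangle\,dx,
\end{align*}
whose right-hand side is bounded from below by Lemma~\ref{lem:Re-Im} (the strong monotonicity estimate). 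On the left-hand side I would apply H\"older's inequality at a slightly higher integrability exponent $p(1+\sigma)$ for $\mathrm{D}\mathbf{w}$, justified by a Gehring-type self-improvement for the homogeneous problem, together with the John--Nirenberg inequality bounding $\|\mathfrak{a}-\overline{\mathfrak{a}}\|_{L^q(B)}$ in terms of $[\mathfrak{a}]^{r_0}_{\mathrm{BMO}}$. Choosing the interpolation exponent $\vartheta\in(0,1)$ to balance these two ingredients produces
\begin{align*}
\fint_{\Omega_{2\xi}(x_0)}|\mathrm{D}\mathbf{w}-\mathrm{D}\mathbf{v}|^p\,dx \le C\bigl([\mathfrak{a}]^{r_0}_{\mathrm{BMO}}\bigr)^{\vartheta}\fint_{\Omega_{4\xi}(x_0)}(|\mathrm{D}\mathbf{u}|^p+|\mathbf{F}|^p+\mu^p)\,dx.
\end{align*}

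Third, for the interior sup-estimate \eqref{ReH-ineq}, I rewrite the frozen problem for $\mathbf{v}$ as a real $2N\times n$ system via the decomposition in \eqref{eq:Z}. The sectoriality of $\overline{\mathfrak{a}}$ guarantees the ellipticity/monotonicity of the resulting real system with constant leading coefficient, so the classical interior and boundary Lipschitz-type regularity for homogeneous $p$-Laplace systems on $(\delta_{\mathrm{BMO}},r_0)$-Reifenberg flat domains---obtained, for instance, by approximation with solutions on half-balls as in \cite{BW2004,BW2008} and their successors---delivers the pointwise bound of the sup of $|\mathrm{D}\mathbf{v}|$ on $\Omega_{\xi}(x_0)$ by the $L^p$-mean on $\Omega_{2\xi}(x_0)$, provided $\delta_{\mathrm{BMO}}$ is chosen small enough.

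Finally, I would combine the two $L^p$ comparison estimates by the triangle inequality; a Young-inequality split introduces the free parameter $\delta\in(0,\delta_{\mathrm{BMO}})$, absorbing a $\delta$-multiple of $\fint|\mathrm{D}\mathbf{u}|^p$ and concentrating the remaining $\delta$-dependent constant $C_\delta$ on the $\mathbf{F}$-term, which yields \eqref{Comp-ineq}. The main obstacle I anticipate is the second step: the improvement from a plain BMO-norm factor to the power $([\mathfrak{a}]_{\mathrm{BMO}}^{r_0})^{\vartheta}$ requires a delicate interplay between the Gehring higher-integrability for the complex-coefficient homogeneous problem and the John--Nirenberg exponential integrability, and the subquadratic regime $1<p<2$ forces careful Young splits through the weighted expression $(\mu^2+|\mathrm{D}\mathbf{u}|^2+|\mathrm{D}\mathbf{w}|^2)^{(p-2)/2}$ arising from Lemma~\ref{lem:Re-Im}. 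A secondary obstacle is ensuring that the passage to real systems via \eqref{eq:Z} preserves the structural constants uniformly in $\overline{\mathfrak{a}}$ so that the cited Reifenberg-flat regularity results apply with constants depending only on $\texttt{SD}$.
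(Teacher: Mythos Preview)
Your proposal is correct and follows essentially the same two-step freezing strategy as the paper (first remove the source term, then freeze the coefficient), combining the monotonicity from Lemma~\ref{lem:Re-Im}, Gehring higher integrability for $\mathbf{w}$, and Uhlenbeck-type Lipschitz regularity for the constant-coefficient problem, with the boundary case deferred to the Reifenberg-flat literature. The only deviations are cosmetic: the paper freezes to $\mathfrak{a}(x_0)$ rather than the average $\overline{\mathfrak{a}}_{B_{2\xi}(x_0)}$ and extracts the power $\vartheta$ by optimizing a Young split together with the $L^\infty$-bound on $\mathfrak{a}$ rather than John--Nirenberg; also, the free parameter $\delta$ in \eqref{Comp-ineq} actually enters already in the first comparison step (it is unavoidable in the subquadratic regime $1<p<2$, so your stated bound $\fint|\mathrm{D}\mathbf{u}-\mathrm{D}\mathbf{w}|^p\le C\fint(|\mathbf{F}|^p+\mu^p)$ should read $\delta\fint|\mathrm{D}\mathbf{u}|^p+C_\delta\fint(|\mathbf{F}|^p+\mu^p)$) rather than in the final combination.
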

\begin{proof}
In the first step, we establish the interior integral comparison estimates in $B_{4\xi}(x_0) \subset \Omega$. Let us consider $\mathbf{w} \in \mathbf{u} + W^{1,p}_0(B_{4\xi}(x_0);\mathbb{C}^{N})$ be the unique solution to the following equation
\begin{align}\label{3.3}
\begin{cases}
\mathrm{div}\left(\mathfrak{a}(x) \mathbb{V}_{p}^{\mu}(\mathrm{D}\mathbf{w})\right) = 0, & \text{in}\ B_{4\xi}(x_0), \\[6pt]
\mathbf{w} = \mathbf{u}, & \text{on}\ \partial B_{4\xi}(x_0).
\end{cases}
\end{align}
Let $\mathbf{w}_{\mathbf{u}}$ be an extension of $\mathbf{w}$ to $\Omega$ and defined by setting 
\begin{align*}
\mathbf{w}_{\mathbf{u}}=\mathbf{w} \ \text{in} \  B_{4\xi}(x_0) \quad \text{and} \quad \mathbf{w}_{\mathbf{u}} = \mathbf{u} \ \text{in} \ \Omega \setminus B_{4\xi}(x_0).
\end{align*}
By testing the weak formulation~\eqref{weak-solution}  and~\eqref{3.3} with $\mathbf{u} - \mathbf{w}_{\mathbf{u}} \in W^{1,p}_0(B_{4\xi}(x_0); \mathbb{C}^N)$, one obtains
\begin{align}\label{3.4}
\fint_{B_{4\xi}(x_0)} \mathfrak{a}(x)\langle \mathbb{V}_{p}^{\mu}(\mathrm{D}\mathbf{u})
 - \mathbb{V}_{p}^{\mu}(\mathrm{D}\mathbf{w}), \mathrm{D}\mathbf{u} - \mathrm{D}\mathbf{w} \rangle\, dx  = \fint_{B_{4\xi}(x_0)} \langle |\mathbf{F}|^{p-2}\mathbf{F} , \mathrm{D}\mathbf{u}-\mathrm{D}\mathbf{w} \rangle \, dx.
\end{align}
The modulus of the left-hand side of this equation can be estimated as follows
\begin{align}\label{3.5a}
L & := \left|	\fint_{B_{4\xi}(x_0)} \mathfrak{a}(x)\langle \mathbb{V}_{p}^{\mu}(\mathrm{D}\mathbf{u})
 - \mathbb{V}_{p}^{\mu}(\mathrm{D}\mathbf{w}), \mathrm{D}\mathbf{u} - \mathrm{D}\mathbf{w} \rangle\, dx\right|\notag\\
&\ge \fint_{B_{4\xi}(x_0)} \mathrm{Re} \left[ \mathfrak{a}(x)\langle \mathbb{V}_{p}^{\mu}(\mathrm{D}\mathbf{u}) - \mathbb{V}_{p}^{\mu}(\mathrm{D}\mathbf{w}), \mathrm{D}\mathbf{u} - \mathrm{D}\mathbf{w} \rangle \right]\, dx\notag\\
&= \fint_{B_{4\xi}(x_0)} \mathrm{Re}(\mathfrak{a}(x))\mathrm{Re}\left(\langle \mathbb{V}_{p}^{\mu}(\mathrm{D}\mathbf{u}) - \mathbb{V}_{p}^{\mu}(\mathrm{D}\mathbf{w}), \mathrm{D}\mathbf{u} - \mathrm{D}\mathbf{w} \rangle\right) dx\notag\\
&\qquad \qquad -\fint_{B_{4\xi}(x_0)} \mathrm{Im}(\mathfrak{a}(x))\mathrm{Im}\left(\langle \mathbb{V}_{p}^{\mu}(\mathrm{D}\mathbf{u}) - \mathbb{V}_{p}^{\mu}(\mathrm{D}\mathbf{w}), \mathrm{D}\mathbf{u} - \mathrm{D}\mathbf{w} \rangle\right) dx \notag \\
&\ge \fint_{B_{4\xi}(x_0)} \mathrm{Re}(\mathfrak{a}(x))\mathrm{Re}\left(\langle \mathbb{V}_{p}^{\mu}(\mathrm{D}\mathbf{u}) - \mathbb{V}_{p}^{\mu}(\mathrm{D}\mathbf{w}), \mathrm{D}\mathbf{u} - \mathrm{D}\mathbf{w} \rangle\right) dx\notag\\
&\qquad \qquad -\fint_{B_{4\xi}(x_0)} \left|\mathrm{Im}(\mathfrak{a}(x))\right| \left|\mathrm{Im}\left(\langle \mathbb{V}_{p}^{\mu}(\mathrm{D}\mathbf{u}) - \mathbb{V}_{p}^{\mu}(\mathrm{D}\mathbf{w}), \mathrm{D}\mathbf{u} - \mathrm{D}\mathbf{w} \rangle\right)\right| dx.
\end{align}
We note that the first inequality in~\eqref{3.5a} is not sharp, as it disregards the imaginary part of the integral. Nevertheless, this estimate is essentially optimal, as the neglected term typically vanishes in general. By virtue of Lemma~\ref{lem:Re-Im}, it follows from~\eqref{3.5a} that
\begin{align}
L &\ge \fint_{B_{4\xi}(x_0)} \left(c_1 \mathrm{Re}(\mathfrak{a}(x)) - \sqrt{c_2^2-c_1^2}|\mathrm{Im}(\mathfrak{a}(x))|\right)(\mu^2 + |\mathrm{D}\mathbf{u}|^2 + |\mathrm{D}\mathbf{w}|^2)^{\frac{p-2}{2}}
|\mathrm{D}\mathbf{u} - \mathrm{D}\mathbf{w}|^2 dx.\notag
\end{align}
Assumption~\eqref{new-cond} ensures that
\begin{align}\label{3.5}
L &\ge c_1\gamma_0	\fint_{B_{4\xi}(x_0)} (\mu^2 + |\mathrm{D}\mathbf{u}|^2 + |\mathrm{D}\mathbf{w}|^2)^{\frac{p-2}{2}} |\mathrm{D}\mathbf{u} - \mathrm{D}\mathbf{w}|^2 dx.
\end{align}
On the other hand, by taking the modulus of the right-hand side, one has
\begin{align*}
\left|	\fint_{B_{4\xi}(x_0)} \langle |\mathbf{F}|^{p-2}\mathbf{F} , \mathrm{D}\mathbf{u}-\mathrm{D}\mathbf{w} \rangle  dx\right|\le\fint_{B_{4\xi}(x_0)}|\mathbf{F}|^{p-1}|\mathrm{D}\mathbf{u}-\mathrm{D}\mathbf{w}| dx,
\end{align*}
and by combining with~\eqref{3.4} and~\eqref{3.5}, we infer that
\begin{align}\label{3.8a}
\fint_{B_{4\xi}(x_0)} (\mu^2 + |\mathrm{D}\mathbf{u}|^2 + |\mathrm{D}\mathbf{w}|^2)^{\frac{p-2}{2}}
|\mathrm{D}\mathbf{u} - \mathrm{D}\mathbf{w}|^2 dx \le \gamma_0^{-1}\fint_{B_{4\xi}(x_0)}|\mathbf{F}|^{p-1}|\mathrm{D}\mathbf{u}-\mathrm{D}\mathbf{w}| dx.	
\end{align}
On making use of this inequality and Young's inequality, it follows that
\begin{align}\label{3.4a}
\fint_{B_{4\xi}(x_0)}|\mathrm{D}\mathbf{u} - \mathrm{D}\mathbf{w}|^pdx \le \delta \fint_{B_{4\xi}(x_0)}|\mathrm{D}\mathbf{u}|^pdx + C_{\delta} \left(\fint_{B_{4\xi}(x_0)}|\mathbf{F}|^p + \mu^p dx\right),
\end{align}
for each $\delta>0$. Indeed, the proof of ~\eqref{3.4a} is straightforward in the case $p \ge 2$ by using the following fundamental inequality
$$\fint_{B_{4\xi}(x_0)} |\mathrm{D}\mathbf{u} - \mathrm{D}\mathbf{w}|^p dx \le 2^{\frac{p-2}{2}} \fint_{B_{4\xi}(x_0)} (\mu^2 + |\mathrm{D}\mathbf{u}|^2 + |\mathrm{D}\mathbf{w}|^2)^{\frac{p-2}{2}} |\mathrm{D}\mathbf{u} - \mathrm{D}\mathbf{w}|^2 dx.$$
For the remaining case $1<p<2$, there exists a constant $C^*>0$ depending only on $p$ such that
\begin{align}\label{sp-ineq}
(\mu^2 + |\mathrm{D}\mathbf{u}|^2 + |\mathrm{D}\mathbf{w}|^2)^\frac{p}{2}\le \mu^p + |\mathrm{D}\mathbf{u}|^p +  |\mathrm{D}\mathbf{w}|^p\le C^*\left(\mu^p + |\mathrm{D}\mathbf{u}|^p +  |\mathrm{D}\mathbf{u}-\mathrm{D}\mathbf{w}|^p\right).
\end{align}
In the next step, we decompose $|\mathrm{D}\mathbf{u} - \mathrm{D}\mathbf{w}|^p$ as follows
\begin{align}\notag
|\mathrm{D}\mathbf{u} - \mathrm{D}\mathbf{w}|^p = (\mu^2 + |\mathrm{D}\mathbf{u}|^2 + |\mathrm{D}\mathbf{w}|^2)^{\frac{p(2-p)}{4}} \left[(\mu^2 + |\mathrm{D}\mathbf{u}|^2 + |\mathrm{D}\mathbf{w}|^2)^{\frac{p-2}{2}}
|\mathrm{D}\mathbf{u} - \mathrm{D}\mathbf{w}|^2\right]^\frac{p}{2},
\end{align}
and then applying Young’s inequality, we arrive at
\begin{align*}
\fint_{B_{4\xi}(x_0)} |\mathrm{D}\mathbf{u} - \mathrm{D}\mathbf{w}|^p dx &\le \frac{\delta}{C^*(1+2\delta)}\fint_{B_{4\xi}(x_0)}(\mu^2 + |\mathrm{D}\mathbf{u}|^2 + |\mathrm{D}\mathbf{w}|^2)^\frac{p}{2} dx \\
& \qquad \qquad  + C_\delta \fint_{B_{4\xi}(x_0)} (\mu^2 + |\mathrm{D}\mathbf{u}|^2 + |\mathrm{D}\mathbf{w}|^2)^{\frac{p-2}{2}} |\mathrm{D}\mathbf{u} - \mathrm{D}\mathbf{w}|^2 dx.	
\end{align*}
Taking~\eqref{3.8a} and~\eqref{sp-ineq} into account, one obtains that
\begin{align}
\fint_{B_{4\xi}(x_0)} |\mathrm{D}\mathbf{u} - \mathrm{D}\mathbf{w}|^p dx &\le \frac{\delta}{1+2\delta}\fint_{B_{4\xi}(x_0)}(\mu^p + |\mathrm{D}\mathbf{u}|^p + |\mathrm{D}\mathbf{u}-\mathrm{D}\mathbf{w}|^p) dx \notag \\
& \qquad \qquad + C_\delta\fint_{B_{4\xi}(x_0)}|\mathbf{F}|^{p-1}|\mathrm{D}\mathbf{u}-\mathrm{D}\mathbf{w}| dx \notag\\
& \le \frac{\delta}{1+2\delta}\fint_{B_{4\xi}(x_0)}(\mu^p + |\mathrm{D}\mathbf{u}|^p + |\mathrm{D}\mathbf{u}-\mathrm{D}\mathbf{w}|^p) dx \notag \\
& \qquad \qquad + \frac{\delta}{1+2\delta}\fint_{B_{4\xi}(x_0)}|\mathrm{D}\mathbf{u}-\mathrm{D}\mathbf{w}|^p dx + C_\delta\fint_{B_{4\xi}(x_0)}|\mathbf{F}|^p dx,	\label{u-w-100}
\end{align}
which leads to~\eqref{3.4a}. On the other hand, let us define $\widehat{\mathbf{w}} = (\mathrm{Re}(\mathbf{w}),\mathrm{Im}(\mathbf{w}))$ and consider the following two operators $\mathcal{Q}_1, \mathcal{Q}_2: \Omega \times \mathbb{R}^{2N \times n} \to \mathbb{R}^{N \times n}$, defined by 
\begin{align*}
\mathcal{Q}_1(x,\eta) = \mathrm{Re}(\mathfrak{a}(x))\left(\mu^2 + |\mathrm{D}\eta|^2\right)^{\frac{p-2}{2}}\eta^1  -\mathrm{Im}(\mathfrak{a}(x))\left(\mu^2 + |\mathrm{D}\eta|^2\right)^{\frac{p-2}{2}}\eta^2,
\end{align*}
and
\begin{align*}
\mathcal{Q}_2(x,\eta) = \mathrm{Im}(\mathfrak{a}(x))\left(\mu^2 + |\mathrm{D}\eta|^2\right)^{\frac{p-2}{2}}\eta^1 + \mathrm{Re}(\mathfrak{a}(x))\left(\mu^2 + |\mathrm{D}\eta|^2\right)^{\frac{p-2}{2}}\eta^2,
\end{align*}
for $x \in \Omega$ and $\eta = (\eta^1,\eta^2) \in \mathbb{R}^{N \times n} \times \mathbb{R}^{N \times n}$. By~\eqref{3.3} and  using the fact that $|\mathrm{D}\widehat{\mathbf{w}}| = |\mathrm{D}\mathbf{w}|$, one can verify that $\widehat{\mathbf{w}}$ satisfies the following two systems
\begin{align}\label{eq-A12}
\mathrm{div}\left(\mathcal{Q}_1(x,\mathrm{D}\widehat{\mathbf{w}})\right) = 0, \ \mbox{ and } \ 
\mathrm{div}\left(\mathcal{Q}_2(x,\mathrm{D}\widehat{\mathbf{w}})\right) = 0, \quad \text{in}\ B_{4\xi}(x_0).
\end{align}
Under assumption~\eqref{new-cond}, we can verify that at least one of the two operators, $\mathcal{Q}_1$ or $\mathcal{Q}_2$, satisfies the standard ellipticity condition for real-valued systems. Therefore, a higher integrability of $\mathrm{D}\widehat{\mathbf{w}}$ for system~\eqref{eq-A12}. At this stage, since $|\mathrm{D}\widehat{\mathbf{w}}| = |\mathrm{D}\mathbf{w}|$, this immediately implies the higher integrability of $\mathrm{D}\mathbf{w}$. That is, there exists a constant $\theta>1$ such that
\begin{align}\label{RH-w}
\left(\fint_{B_{2\xi}(x_0)}|\mathrm{D}\mathbf{w}|^{\theta p} dx\right)^{\frac{1}{\theta}} \le C \fint_{B_{4\xi}(x_0)}{(|\mathrm{D}\mathbf{w}|^p+ \mu^p) dx}.
\end{align}

For the next step of the proof, we consider $\mathbf{v} \in \mathbf{w} + W^{1,p}_0(B_{2\xi}(x_0);\mathbb{C}^{N})$ to be the unique solution to the following homogeneous system
\begin{align}\label{3.8}
\begin{cases}
\mathrm{div}\left(\mathfrak{a}_0\mathbb{V}_{p}^{\mu}(\mathrm{D}\mathbf{v})\right) = 0, & \text{in}\ B_{2\xi}(x_0), \\[6pt]
\mathbf{v} = \mathbf{w}, & \text{on}\ \partial B_{2\xi}(x_0),
\end{cases}
\end{align}
where $\mathfrak{a}_0 = \mathfrak{a}(x_0)$. Following the similar argument, by setting the real-valued vector $\widehat{\mathbf{v}} = (\mathrm{Re}(\mathbf{v}),\mathrm{Im}(\mathbf{v}))$, it follows from~\eqref{3.8} that $\widehat{\mathbf{v}}$ solves the $p$-Laplace system with constant coefficients:
\begin{align*}
\mathrm{div}\left((\mu^2 + |\mathrm{D}\widehat{\mathbf{v}}|^{2})^{\frac{p-2}{2}}\mathrm{D}\widehat{\mathbf{v}}\right) = 0, \quad \text{in}\ B_{2\xi}(x_0).
\end{align*}
By virtue of the classical regularity results by Uhlenbeck in~\cite{U77}, it follows that $\mathrm{D}\widehat{\mathbf{v}}$ is locally bounded. Namely, 
\begin{align*}
\sup_{x \in B_{\xi}(x_0)} |\mathrm{D}\widehat{\mathbf{v}}(x)| \le C \left(\fint_{B_{2\xi}(x_0)}{(|\mathrm{D}\widehat{\mathbf{v}}|^p+ \mu^p) dx}\right)^{\frac{1}{p}},
\end{align*}
which leads to~\eqref{ReH-ineq}. On the other hand, testing~\eqref{3.3} and~\eqref{3.8} by $\mathbf{v} - \mathbf{w}$, one gets that 
\begin{align}
\fint_{B_{2\xi}(x_0)} \mathfrak{a}_0 & \langle \mathbb{V}_{p}^{\mu}(\mathrm{D}\mathbf{v})
 - \mathbb{V}_{p}^{\mu}(\mathrm{D}\mathbf{w}), \mathrm{D}\mathbf{v} - \mathrm{D}\mathbf{w} \rangle\, dx \notag \\
 & \qquad = \fint_{B_{2\xi}(x_0)} \left(\mathfrak{a}(x)-\mathfrak{a}_0\right)\langle \mathbb{V}_{p}^{\mu}(\mathrm{D}\mathbf{w}), \mathrm{D}\mathbf{v} - \mathrm{D}\mathbf{w} \rangle\, dx.\label{v-w-est1}
\end{align}
Similar to the proof of~\eqref{3.5a}, we also obtain that
\begin{align}
& \left|\fint_{B_{2\xi}(x_0)} \mathfrak{a}_0  \langle \mathbb{V}_{p}^{\mu}(\mathrm{D}\mathbf{v})
 - \mathbb{V}_{p}^{\mu}(\mathrm{D}\mathbf{w}), \mathrm{D}\mathbf{v} - \mathrm{D}\mathbf{w} \rangle\, dx\right| \notag \\
 & \qquad \ge \left(c_1 \mathrm{Re}(\mathfrak{a}_0) - \sqrt{c_2^2-c_1^2}|\mathrm{Im}(\mathfrak{a}_0)| \right) \fint_{B_{2\xi}(x_0)} (\mu^2 + |\mathrm{D}\mathbf{v}|^2 + |\mathrm{D}\mathbf{w}|^2)^{\frac{p-2}{2}} |\mathrm{D}\mathbf{u} - \mathrm{D}\mathbf{w}|^2 dx.\notag
\end{align}
Remark that by the assumption~\eqref{new-cond}, which implies to $\mathrm{Re}(\mathfrak{a}_0) - c_0|\mathrm{Im}(\mathfrak{a}_0)| \ge \gamma_0$, where $c_0$ is defined as in~\eqref{c_0}. Inserting this into~\eqref{v-w-est1}, it yields
\begin{align}
c_1\gamma_0\fint_{B_{2\xi}(x_0)} & (\mu^2 + |\mathrm{D}\mathbf{v}|^2 + |\mathrm{D}\mathbf{w}|^2)^{\frac{p-2}{2}} |\mathrm{D}\mathbf{v} - \mathrm{D}\mathbf{w}|^2 dx \notag \\
& \qquad \le \fint_{B_{2\xi}(x_0)} \left|\mathfrak{a}(x)-\mathfrak{a}_0\right| \left|\mathbb{V}_{p}^{\mu}(\mathrm{D}\mathbf{w})\right| \left|\mathrm{D}\mathbf{v} - \mathrm{D}\mathbf{w}\right|\, dx \notag \\
& \qquad \le \fint_{B_{2\xi}(x_0)} \left|\mathfrak{a}(x)-\mathfrak{a}_0\right| \left(\mu^2+|\mathrm{D}\mathbf{w}|^2\right)^{\frac{p-1}{2}} \left|\mathrm{D}\mathbf{v} - \mathrm{D}\mathbf{w}\right|\, dx.\label{u-w-200}
\end{align}
in a similar fashion to the derivation of~\eqref{u-w-100}, we infer that
\begin{align}
\fint_{B_{2\xi}(x_0)} |\mathrm{D}\mathbf{v} - \mathrm{D}\mathbf{w}|^p dx &\le \frac{\epsilon}{4}\fint_{B_{2\xi}(x_0)}|\mathrm{D}\mathbf{w}|^p  dx \notag \\
& \qquad + C_{\epsilon} \fint_{B_{2\xi}(x_0)} (\mu^2 + |\mathrm{D}\mathbf{v}|^2 + |\mathrm{D}\mathbf{w}|^2)^{\frac{p-2}{2}} |\mathrm{D}\mathbf{v} - \mathrm{D}\mathbf{w}|^2 dx, \notag
\end{align}
for every $\epsilon \in (0,1)$. Merging this with~\eqref{u-w-200} and employing Young's inequality with $\epsilon$, one obtains
\begin{align}
\fint_{B_{2\xi}(x_0)} |\mathrm{D}\mathbf{v} - \mathrm{D}\mathbf{w}|^p dx &\le \frac{\epsilon}{4}\fint_{B_{2\xi}(x_0)}|\mathrm{D}\mathbf{w}|^p  dx + \frac{\epsilon}{2} \fint_{B_{2\xi}(x_0)} |\mathrm{D}\mathbf{v} - \mathrm{D}\mathbf{w}|^p dx \notag \\
& \qquad +  C_{\epsilon} \fint_{B_{2\xi}(x_0)} \left|\mathfrak{a}(x)-\mathfrak{a}_0\right|^{\frac{p}{p-1}} \left(\mu^2+|\mathrm{D}\mathbf{w}|^2\right)^{\frac{p}{2}} dx, \notag
\end{align}
which gives
\begin{align}
\fint_{B_{2\xi}(x_0)} |\mathrm{D}\mathbf{v} - \mathrm{D}\mathbf{w}|^p dx &\le \frac{\epsilon}{2}\fint_{B_{2\xi}(x_0)}|\mathrm{D}\mathbf{w}|^p  dx \notag \\
& \qquad +  C_{\epsilon} \fint_{B_{2\xi}(x_0)} \left|\mathfrak{a}(x)-\mathfrak{a}_0\right|^{\frac{p}{p-1}} \left(\mu^2+|\mathrm{D}\mathbf{w}|^2\right)^{\frac{p}{2}} dx. \label{v-w-300}
\end{align}
Applying H\"older's inequality for the last term on the right-hand side of~\eqref{v-w-300}, it yields
\begin{align}
\fint_{B_{2\xi}(x_0)} & |\mathrm{D}\mathbf{v} - \mathrm{D}\mathbf{w}|^p dx \le \frac{\epsilon}{2}\fint_{B_{2\xi}(x_0)}|\mathrm{D}\mathbf{w}|^p  dx \notag \\
& \qquad +  C_{\epsilon} \left(\fint_{B_{2\xi}(x_0)} \left|\mathfrak{a}(x)-\mathfrak{a}_0\right|^{\frac{p\theta}{(p-1)(\theta-1)}} dx \right)^{\frac{\theta-1}{\theta}} \left(\fint_{B_{2\xi}(x_0)}\left(\mu^2+|\mathrm{D}\mathbf{w}|^2\right)^{\frac{p\theta}{2}} dx\right)^{\frac{1}{\theta}}. \notag
\end{align}
Thanks to reverse H\"older inequality~\eqref{RH-w}, it follows that
\begin{align}\label{v-w-400}
\fint_{B_{2\xi}(x_0)} & |\mathrm{D}\mathbf{v} - \mathrm{D}\mathbf{w}|^p dx \le \left[\frac{\epsilon}{2} + C_{\epsilon} \left(\fint_{B_{2\xi}(x_0)} \left|\mathfrak{a}(x)-\mathfrak{a}_0\right|^{\frac{p\theta}{(p-1)(\theta-1)}} dx \right)^{\frac{\theta-1}{\theta}} \right]\left(\fint_{B_{4\xi}(x_0)}\left(\mu^p+|\mathrm{D}\mathbf{w}|^p\right) dx\right).
\end{align}
Since $C_{\epsilon}$ can be presented by $\epsilon^{-c}$ for a constant $c=c(p)>0$, it is possible to choose $\epsilon$ such that
\begin{align*}
\frac{\epsilon}{2} = C_{\epsilon} \left(\fint_{B_{2\xi}(x_0)} \left|\mathfrak{a}(x)-\mathfrak{a}_0\right|^{\frac{p\theta}{(p-1)(\theta-1)}} dx \right)^{\frac{\theta-1}{\theta}},
\end{align*}
which is equivalent to
\begin{align*}
\epsilon = C\left(\fint_{B_{2\xi}(x_0)} \left|\mathfrak{a}(x)-\mathfrak{a}_0\right|^{\frac{p\theta}{(p-1)(\theta-1)}} dx \right)^{\vartheta},
\end{align*}
for a positive constant $\vartheta = \frac{\theta-1}{\theta(1+c)}  \in (0,1)$. Substituting this into~\eqref{v-w-400}, one gets
\begin{align}\label{v-w-500}
\fint_{B_{2\xi}(x_0)} & |\mathrm{D}\mathbf{v} - \mathrm{D}\mathbf{w}|^p dx \le C\left([\mathfrak{a}]^{r_0}_{\mathrm{BMO}}\right)^{\vartheta}\fint_{B_{4\xi}(x_0)}\left(\mu^p+|\mathrm{D}\mathbf{w}|^p\right) dx.
\end{align}
Here, we remark that $\epsilon \le C\left([\mathfrak{a}]^{r_0}_{\mathrm{BMO}}\right)^{\vartheta}$ since  $\left|\mathfrak{a}(x)-\mathfrak{a}_0\right|$ is bounded. Using the following estimate
\begin{align*}
\fint_{B_{2\xi}(x_0)}|\mathrm{D}\mathbf{u} - \mathrm{D}\mathbf{v}|^p dx &\le	C\left(\fint_{B_{2\xi}(x_0)}|\mathrm{D}\mathbf{u} - \mathrm{D}\mathbf{w}|^p dx+\fint_{B_{2\xi}(x_0)}|\mathrm{D}\mathbf{v} - \mathrm{D}\mathbf{w}|^p dx\right),
\end{align*}
which allows us to conclude~\eqref{Comp-ineq} from~\eqref{3.4a} and~\eqref{v-w-500}. Finally, in the case where $x_0$ is near the boundary, i.e., $B_{4\xi}(x_0) \not\subset \Omega$, we can employ analogous estimates by applying the geometric properties of a Reifenberg flat domain, similar to the treatment of elliptic systems with real-valued coefficients. We refer the reader to works such as. We refer the reader to seminal papers such as~\cite{BW2004} or~\cite{TN23} for detailed proofs. As a consequence, we point out the existence of a constant $\delta_{\mathrm{BMO}} \in (0,1)$ such that if $\Omega$ is $(\delta_{\mathrm{BMO}},r_0)$-Reifenberg flat domain, one can find a function $\mathbf{v} \in W^{1,p}(\Omega_{2\xi}(x_0);\mathbb{C}^{N})$ satisfying both~\eqref{ReH-ineq} and~\eqref{Comp-ineq}. The proof is now complete.
\end{proof}
\\

The technique developed in this paper enables us to treat estimates over level sets, where the level sets of the gradient $\mathrm{D}\mathbf{u}$ are locally controlled by those of the structural data. The comparison strategy established in Lemma~\ref{lem-comp} leads to the following lemma, which in turn, yields the conclusion of Theorem~\ref{theo-CZ}. A distinct point emphasizing is that the level-set inequality is constructed in terms of fractional maximal distribution functions $\mathbf{M}_\beta$, for $\beta \in [0,n)$.

\begin{lemma}
\label{theo-LV}
Let $\beta \in [0,n)$, $\mathbf{F} \in L^p(\Omega;\mathbb{C}^{N \times n})$ and $\mathbf{u} \in W_0^{1,p}(\Omega;\mathbb{C}^{N})$ be a weak solution to~\eqref{eq-main} under assumption (A1). Then, for every $\varepsilon \in (0,1)$, there exist constants 
$$\sigma=\sigma(\beta,\texttt{SD})>0, \ \delta_{\mathrm{BMO}}=\delta_{\mathrm{BMO}}(\varepsilon,\beta,\texttt{SD})>0,\ \kappa=\kappa(\varepsilon,\delta_{\mathrm{BMO}},\beta,\texttt{SD})>0,$$ 
such that if,  for some $0<r_0<\mathrm{diam}(\Omega)$, $[\mathfrak{a}]^{r_0}_{\mathrm{BMO}} \le \delta_{\mathrm{BMO}}$ in view of assumption (A3) and $\Omega$ is $(\delta_{\mathrm{BMO}},r_0)$-Reifenberg flat according to assumption (A2), the following level-set estimate 
\begin{align}\label{LV-ineq}
\left|\left\{\mathbf{M}_{\beta}(|\mathrm{D}\mathbf{u}|^p) > \lambda; \ \mathbf{M}_{\beta}(|\mathbf{F}|^p) \le \kappa\lambda\right\}\right| \le C \varepsilon \left|\left\{\mathbf{M}_{\beta}(|\mathrm{D}\mathbf{u}|^p) > \sigma\lambda\right\}\right|
\end{align}
holds for all $\lambda \ge \mu^p\mathrm{diam}(\Omega)^{\beta} \kappa^{-1}$.
\end{lemma}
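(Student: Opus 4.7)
The strategy is to verify the hypotheses of the Reifenberg-type covering Lemma~\ref{lem:Vitali} for the two sets $\mathcal{V}_1 := \{\mathbf{M}_\beta(|\mathrm{D}\mathbf{u}|^p) > \lambda\} \cap \{\mathbf{M}_\beta(|\mathbf{F}|^p) \le \kappa\lambda\}$ and $\mathcal{V}_2 := \{\mathbf{M}_\beta(|\mathrm{D}\mathbf{u}|^p) > \sigma\lambda\}$, with $\sigma \in (0,1)$, $\kappa$, and $\delta_{\mathrm{BMO}}$ chosen small in a carefully fixed order. For the global smallness $|\mathcal{V}_1| \le \varepsilon|B_{R_0}|$ with $R_0$ a fixed multiple of $r_0$, I would argue that if $\mathcal{V}_1$ is nonempty, any $z_0 \in \mathcal{V}_1$ yields $\int_\Omega |\mathbf{F}|^p \le C\kappa\lambda\,\mathrm{diam}(\Omega)^{n-\beta}$ by testing $\mathbf{M}_\beta(|\mathbf{F}|^p)(z_0) \le \kappa\lambda$ at scale $\mathrm{diam}(\Omega)$. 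Together with the lower bound on $\lambda$ and the standard energy estimate $\int_\Omega |\mathrm{D}\mathbf{u}|^p \le C(\int_\Omega |\mathbf{F}|^p + \mu^p|\Omega|)$ (obtained by testing~\eqref{weak-solution} with $\mathbf{u}$ and invoking~\eqref{eq:Lp}), the weak-type bound of Proposition~\ref{lem:bound-M-beta} gives $|\mathcal{V}_1| \le C\kappa^{n/(n-\beta)}|B_{R_0}|$, so choosing $\kappa$ small enough yields the required smallness.

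The heart of the argument is the local implication: for every $x_0 \in \Omega$ and $0 < \rho \le R_0$, $\Omega_\rho(x_0) \not\subset \mathcal{V}_2$ implies $|B_\rho(x_0)\cap\mathcal{V}_1| < \varepsilon|B_\rho(x_0)|$. Fix such $x_0, \rho$ and pick $y_0 \in \Omega_\rho(x_0)$ with $\mathbf{M}_\beta(|\mathrm{D}\mathbf{u}|^p)(y_0)\le\sigma\lambda$; we may also assume $\mathcal{V}_1\cap B_\rho(x_0) \neq \emptyset$, furnishing $y_1 \in B_\rho(x_0)$ with $\mathbf{M}_\beta(|\mathbf{F}|^p)(y_1)\le\kappa\lambda$. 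I would first split $\mathbf{M}_\beta = \max(\mathbf{M}_\beta^{2\rho},\mathbf{T}_\beta^{2\rho})$: for $x\in B_\rho(x_0)$ and $\rho' \ge 2\rho$ the inclusion $B_{\rho'}(x) \subset B_{2\rho'}(y_0)$ gives $\mathbf{T}_\beta^{2\rho}(|\mathrm{D}\mathbf{u}|^p)(x) \le 2^n\sigma\lambda$, so $\sigma \le 2^{-n-1}$ forces this tail below $\lambda/2$ and reduces the task to estimating $\{\mathbf{M}_\beta^{2\rho}(|\mathrm{D}\mathbf{u}|^p)>\lambda\}\cap B_\rho(x_0)$, after localising the maximal function to $B_{3\rho}(x_0)$.

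Next, I would invoke Lemma~\ref{lem-comp} on a ball of radius comparable to $\rho$ (say $\xi = 3\rho$, with $4\xi < r_0$ built into $R_0$) to produce $\mathbf{v}$ satisfying~\eqref{ReH-ineq} and~\eqref{Comp-ineq}. Using $B_{4\xi}(x_0) \subset B_{5\xi}(y_j)$ one obtains $\fint_{B_{4\xi}(x_0)}|\mathrm{D}\mathbf{u}|^p \le C\sigma\rho^{-\beta}\lambda$ and $\fint_{B_{4\xi}(x_0)}|\mathbf{F}|^p \le C\kappa\rho^{-\beta}\lambda$, while $\mu^p \le \kappa\lambda\rho^{-\beta}$ follows from $\lambda \ge \mu^p\mathrm{diam}(\Omega)^\beta/\kappa$. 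Combining with~\eqref{ReH-ineq}--\eqref{Comp-ineq} gives $\sup_{B_\rho(x_0)}|\mathrm{D}\mathbf{v}|^p \le C(\sigma + C_\delta\kappa)\rho^{-\beta}\lambda$, and therefore $\mathbf{M}_\beta^{2\rho}(|\mathrm{D}\mathbf{v}|^p\chi_{B_{3\rho}(x_0)}) \le C(\sigma + C_\delta\kappa)\lambda$. Via $|\mathrm{D}\mathbf{u}|^p \le 2^{p-1}(|\mathrm{D}\mathbf{v}|^p + |\mathrm{D}\mathbf{u}-\mathrm{D}\mathbf{v}|^p)$, if $\sigma$ and $\kappa$ are additionally small enough that this bound is $\le \lambda/4$, then on the set in question $\mathbf{M}_\beta^{2\rho}(|\mathrm{D}\mathbf{u}-\mathrm{D}\mathbf{v}|^p\chi_{B_{3\rho}(x_0)}) > c\lambda$. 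Applying Proposition~\ref{lem:bound-M-beta} and rewriting the right-hand side of~\eqref{Comp-ineq} via $y_0, y_1$ leads to
\[
|B_\rho(x_0)\cap\mathcal{V}_1| \le C\bigl(\delta + ([\mathfrak{a}]^{r_0}_{\mathrm{BMO}})^{\vartheta} + C_\delta\kappa\bigr)^{n/(n-\beta)}\,|B_\rho(x_0)|,
\]
and fixing $\delta$, then $\delta_{\mathrm{BMO}}$, then $\kappa$ small enough forces this below $\varepsilon|B_\rho(x_0)|$.

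The principal obstacle is the delicate ordering of parameters. The universal constant $\sigma$ must be fixed once and for all in terms of $\texttt{SD}$ in order to suppress both the tail-maximal contribution and the pure $\mathbf{v}$-contribution, while $\delta$, $\delta_{\mathrm{BMO}}$, and $\kappa$ must each be chosen small in terms of $\varepsilon$ and of all previously fixed quantities; the blow-up $C_\delta \to \infty$ as $\delta \to 0^+$ forces $\kappa$ to be substantially smaller than a power of $\delta$. A minor technical subtlety is the boundary case $B_{4\xi}(x_0) \not\subset \Omega$, but this is already incorporated into the $\Omega_{\cdot}(x_0)$-formulation of Lemma~\ref{lem-comp} through the $(\delta_{\mathrm{BMO}}, r_0)$-Reifenberg flatness, which also preserves the volume ratios needed in the covering step.
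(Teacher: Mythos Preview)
Your proposal is correct and follows essentially the same route as the paper: both verify the two hypotheses of the covering Lemma~\ref{lem:Vitali} by (i) using Proposition~\ref{lem:bound-M-beta} together with the global energy estimate and a point in $\mathcal{V}_1$ to obtain the smallness $|\mathcal{V}_1|\le C\kappa^{n/(n-\beta)}|B_{R_0}|$, and (ii) splitting $\mathbf{M}_\beta$ into a truncated and a tail part, killing the tail via the choice of $\sigma$, invoking Lemma~\ref{lem-comp} to compare with $\mathbf{v}$, suppressing the $|\mathrm{D}\mathbf{v}|^p$-level set via~\eqref{ReH-ineq}, and finally applying the weak-type bound to $|\mathrm{D}\mathbf{u}-\mathrm{D}\mathbf{v}|^p$. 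The only differences are cosmetic (your cutoff is at $2\rho$ and your comparison scale is $3\rho$, while the paper uses cutoff $\xi$ and comparison scale $2\xi$; your tail constant should read $2^{n-\beta}$ rather than $2^n$), and your discussion of the parameter ordering matches the paper's final constraints.
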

\begin{proof}
Let us denote two subsets being considered by $\mathcal{V}_{\lambda,\kappa}$ and $\mathcal{W}_{\lambda,\sigma}$ as follows 
\begin{align}\label{VW-set}
\mathcal{V}_{\lambda,\kappa} := \left\{\mathbf{M}_{\beta}(|\mathrm{D}\mathbf{u}|^p) > \lambda; \ \mathbf{M}_{\beta}(|\mathbf{F}|^p) \le \kappa\lambda\right\}, \ \mbox{ and } \ \mathcal{W}_{\lambda,\sigma} := \left\{\mathbf{M}_{\beta}(|\mathrm{D}\mathbf{u}|^p) > \sigma\lambda\right\}.
\end{align}
The proof of this lemma can be handled in a standard way. For the reader's convenience, we shall split it into the following steps.

\textbf{Step 1.} We first verify that $\mathcal{V}_{\lambda,\kappa}$ satisfies the first condition required by the hypothesis of Lemma~\ref{lem:Vitali} for all $\lambda>0$ large enough and $\kappa>0$ sufficiently small. More precisely, given $0<R_0\le r_0/8$, we need to show that
\begin{align}\label{ineq-step1}
|\mathcal{V}_{\lambda,\kappa}| \le \varepsilon R_0^n.
\end{align}
Recalling the definition of $\mathcal{V}_{\lambda,\kappa}$ in~\eqref{VW-set} and applying the boundedness of the fractional maximal operators from Proposition~\ref{lem:bound-M-beta}, we obtain
\begin{align}\label{3.17}
|\mathcal{V}_{\lambda,\kappa}| \le  \left|\left\{\mathbf{M}_{\beta}(|\mathrm{D}\mathbf{u}|^p) > \lambda\right\}\right|  \le C \left(\lambda^{-1}\int_\Omega|\mathrm{D}\mathbf{u}|^p dx\right)^{\frac{n}{n-\beta}} \le C \left(\lambda^{-1}\int_\Omega{(|\mathbf{F}|^p+ \mu^p) dx}\right)^{\frac{n}{n-\beta}}.
\end{align}
We remark that the last inequality in~\eqref{3.17} is a standard global a priori estimate, which can be established by a similar argument as in the proof of~\eqref{3.4a}.  Without loss of generality, we may assume that $\mathcal{V}_{\lambda,\kappa}$ is non-empty, since otherwise~\eqref{ineq-step1} trivially holds. Hence, it is possible to find $\nu_1\in\Omega$ such that $\mathbf{M}_{\beta}(|\mathbf{F}|^p)(\nu_1) \le \kappa\lambda$, which  yields
\begin{align}\notag
\rho^{\beta} \fint_{B_\rho(\nu_1)} |\mathbf{F}|^p  dx\le \kappa\lambda, \quad \mbox{ for all } \rho>0.
\end{align}
By taking $\rho = 2\mathrm{diam}(\Omega)$ and noticing the fact that $\Omega\subset B_{2\mathrm{diam}(\Omega)}(\nu_1)$, it gives us
\begin{align}\label{3.15}
\int_{\Omega} |\mathbf{F}|^p  dx \le \int_{B_{2\mathrm{diam}(\Omega)}(\nu_1)}|\mathbf{F}|^pdx \le C \mathrm{diam}(\Omega)^{n-\beta} \kappa\lambda.
\end{align} 
Substituting~\eqref{3.15} into~\eqref{3.17}, it follows that
\begin{align*}
|\mathcal{V}_{\lambda,\kappa}| \le C\left(\mathrm{diam}(\Omega)^{n-\beta}\kappa\right)^{\frac{n}{n-\beta}}= C_0\kappa^{\frac{n}{n-\beta}}R_0^n \le \varepsilon R_0^n,
\end{align*}
where $C_0=C(n,\omega,\beta,R_0/\mathrm{diam}(\Omega))$, $\lambda$ and $\kappa$ satisfy the following constraints
\begin{align}\label{lamb-kappa}
\lambda \ge \mu^p\mathrm{diam}(\Omega)^{\beta} \kappa^{-1}, \ \mbox{ and } \ 0< \kappa \le \left(C_0^{-1}\varepsilon\right)^\frac{n-\beta}{n}.
\end{align}
We now conclude that~\eqref{ineq-step1} holds under the assumption~\eqref{lamb-kappa}.\\

\textbf{Step 2.} In the next step, we shall verify that the pair of sets $(\mathcal{V}_{\lambda,\kappa},\mathcal{W}_{\lambda,\sigma})$ validates the second hypothesis of Lemma~\ref{lem:Vitali}. To this end, let us fix a $x_0\in\Omega$ and $0<R_0\le r_0/8$. For $0<\xi\le R_0$ satisfying $\Omega_\xi(x_0) \not\subset \mathcal{W}_{\lambda,\sigma}$, it is sufficient to show that 
\begin{align}\label{3.18}
|B_\xi(x_0)\cap\mathcal{V}_{\lambda,\kappa}| \le \varepsilon |B_\xi(x_0)|.	
\end{align}
The proof of~\eqref{3.18} is trivial if $B_\xi(x_0)\cap\mathcal{V}_{\lambda,\kappa}=\varnothing$. Hence, we may assume that both sets $\Omega_\xi(x_0) \setminus \mathcal{W}_{\lambda,\sigma}$ and $B_\xi(x_0)\cap\mathcal{V}_{\lambda,\kappa}$ are non-empty. Consequently, there exist $\nu_2,\nu_3\in B_\xi(x_0)$ satisfying 
\begin{align}\label{3.19a}
\mathbf{M}_{\beta}(|\mathrm{D}\mathbf{u}|^p)(\nu_2) \le \sigma\lambda, \quad \text{and} \quad \mathbf{M}_{\beta}(|\mathbf{F}|^p)(\nu_3) \le \kappa\lambda.
\end{align}
The proof of~\eqref{3.18} will be a two-step process.  Firstly, we show that the fractional maximal operator $\mathbf{M}_{\beta}$ in $\mathcal{V}_{\lambda,\kappa}$ can be replaced by its cut-off version,  $\mathbf{M}_{\beta,\Omega_{2\xi}(x_0)}^{\xi}$. To be more precise, we show that
\begin{align}\label{3.19}
\left|B_\xi(x_0)\cap\mathcal{V}_{\lambda,\kappa}\right| \le \left|\left\{\upsilon\in B_\xi(x_0): \ \mathbf{M}_{\beta,\Omega_{2\xi}(x_0)}^{\xi} \left(|\mathrm{D}\mathbf{u}|^p\right)(\upsilon)>\lambda\right\}\right|.
\end{align}
The definition of $\mathcal{V}_{\lambda,\kappa}$ yields that
\begin{align*}
|B_\xi(x_0)\cap\mathcal{V}_{\lambda,\kappa}| \le  |\{\upsilon\in B_\xi(x_0):\mathbf{M}_\beta (|\mathrm{D}\mathbf{u}|^p)(\upsilon)>\lambda\}|.
\end{align*}
From this and notice that $\mathbf{M}_\beta (|\mathrm{D}\mathbf{u}|^p)(\upsilon)=\max\{\mathbf{M}_\beta^\xi (|\mathrm{D}\mathbf{u}|^p)(\upsilon),\mathbf{T}_\beta^\xi (|\mathrm{D}\mathbf{u}|^p)(\upsilon)\}$, we infer that
\begin{align}\label{est-T}
|B_\rho(x_0)\cap\mathcal{V}_{\lambda,\kappa}| & \le |\{\upsilon\in B_\xi(x_0):\mathbf{M}_\beta^\xi (|\mathrm{D}\mathbf{u}|^p)(\upsilon)>\lambda\}| \notag \\
& \qquad \qquad + |\{\upsilon\in B_\xi(x_0):\mathbf{T}_\beta^\xi (|\mathrm{D}\mathbf{u}|^p)(\upsilon)>\lambda\}|.
\end{align}
An interesting point is that the last term in~\eqref{est-T} vanishes for all $\sigma\le3^{\beta-n}$. Indeed, for any  $\upsilon\in B_\xi(x_0)$ and $\rho\ge \xi$, the inclusion  $B_\rho(\upsilon)\subset B_{3\rho}(\nu_2)$ with  ~\eqref{3.19a} implies that
\begin{align*}
\mathbf{T}_\beta^\xi (|\mathrm{D}\mathbf{u}|^p)(\upsilon) &= \sup_{\rho\ge \xi}\rho^\beta \fint_{B_\rho(\upsilon)}|\mathrm{D}\mathbf{u}|^p dx \le 3^{n-\beta}	\mathbf{M}_\beta (|\mathrm{D}\mathbf{u}|^p)(\nu_2)\le  3^{n-\beta}\sigma\lambda \le \lambda,
\end{align*}
which ensures that $\{\upsilon\in B_\xi(x_0):\mathbf{T}_\beta^\xi (|\mathrm{D}\mathbf{u}|^p)(\upsilon)>\lambda\}=\emptyset$. On the other hand, since $B_\rho(\upsilon)\subset B_{2\xi}(x_0)$ for any $\upsilon\in B_\xi(x_0)$ and $0<\rho<\xi$, we can rewrite the cut-off maximal operator as follows
\begin{align*}
\mathbf{M}_\beta^\xi (|\mathrm{D}\mathbf{u}|^p)(\upsilon) = \sup_{0<\rho< \xi}\rho^\beta \fint_{B_\rho(\upsilon)}\chi_{\Omega_{2\xi}(x_0)}|\mathrm{D}\mathbf{u}|^p dx =\mathbf{M}_\beta^\xi \left(\chi_{\Omega_{2\xi}(x_0)}|\mathrm{D}\mathbf{u}|^p\right)(\upsilon).
\end{align*}
By gathering the two aforementioned estimates,~\eqref{3.19} follows directly from~\eqref{est-T}.\\ 

Secondly, we will apply the comparison estimates from Lemma~\ref{lem-comp}, which asserts the existence of a function $\mathbf{v} \in W^{1,p}(\Omega_{4\xi}(x_0);\mathbb{C}^{N})$ and constants $\vartheta,\theta_0,\delta_{\mathrm{BMO}} \in (0,1)$ such that 
\begin{align}\label{v-Linf}
\sup_{x \in \Omega_{2\xi}(x_0)} |\mathrm{D}\mathbf{v}(x)| \le C \left(\fint_{\Omega_{4\xi}(x_0)}{(|\mathrm{D}\mathbf{v}|^p + \mu^p) dx}\right)^{\frac{1}{p}},
\end{align}
and the following estimate
\begin{align}\label{u-v-p}
\fint_{\Omega_{4\xi}(x_0)}|\mathrm{D}\mathbf{u} - \mathrm{D}\mathbf{v}|^pdx \le C\left[\left(\delta + \left([\mathfrak{a}]^{r_0}_{\mathrm{BMO}}\right)^{\vartheta}\right) \fint_{\Omega_{8\xi}(x_0)}|\mathrm{D}\mathbf{u}|^pdx + C_{\delta} \left(\fint_{\Omega_{8\xi}(x_0)}{(|\mathbf{F}|^p+ \mu^p) dx}\right)\right],
\end{align}
holds for every $\delta \in (0,\delta_{\mathrm{BMO}})$, if $\Omega$ is a $(\delta_{\mathrm{BMO}},r_0)$-Reifenberg flat. Since $\Omega_{8\xi}(x_0) \subset \Omega_{9\xi}(\nu_2)$, the first inequality in~\eqref{3.19a} gives us
\begin{align}\label{est-u}
\fint_{\Omega_{8\xi}(x_0)}|\mathrm{D}\mathbf{u}|^pdx \le C \fint_{\Omega_{9\xi}(\nu_2)}|\mathrm{D}\mathbf{u}|^pdx \le C \xi^{-\beta}\mathbf{M}_{\beta}(|\mathrm{D}\mathbf{u}|^p)(\nu_2) \le C\xi^{-\beta}\sigma\lambda.
\end{align}
Similarly, there also holds
\begin{align}\label{est-F}
\fint_{\Omega_{8\xi}(x_0)}|\mathbf{F}|^pdx \le C \xi^{-\beta}\mathbf{M}_{\beta}(|\mathbf{F}|^p)(\nu_3) \le C\xi^{-\beta}\kappa\lambda.
\end{align}
We remind that the assumption~\eqref{lamb-kappa} ensures that $\mu^p \le \xi^{-\beta} \kappa \lambda$. Substituting this with two inequalities~\eqref{est-u}-\eqref{est-F} into~\eqref{u-v-p}, one gets that
\begin{align}\notag
\fint_{\Omega_{4\xi}(x_0)}|\mathrm{D}\mathbf{u} - \mathrm{D}\mathbf{v}|^pdx \le C \left[\left(\delta + \left([\mathfrak{a}]^{r_0}_{\mathrm{BMO}}\right)^{\vartheta}\right) \sigma + C_{\delta} \kappa\right] \xi^{-\beta}\lambda =: C \mathcal{E}(\delta,\kappa) \xi^{-\beta}\lambda,
\end{align}
where $\mathcal{E}(\delta,\kappa,\sigma)$ is given by
\begin{align}\notag
\mathcal{E}(\delta,\kappa,\sigma) := \left(\delta + \left([\mathfrak{a}]^{r_0}_{\mathrm{BMO}}\right)^{\vartheta}\right) \sigma + C_{\delta} \kappa.
\end{align}
To simplify the contribution of the parameters in $\mathcal{E}(\delta,\kappa,\sigma)$, we restrict the parameters as follows
\begin{align}\label{sig-del-kap}
\sigma<3^{-n}< \frac{1}{3}, \quad [\mathfrak{a}]^{r_0}_{\mathrm{BMO}} < \delta < 1, \quad C_{\delta} \kappa \le \frac{\delta}{3}.
\end{align}
It deduces to $\mathcal{E}(\delta,\kappa,\sigma) \le \delta^{\vartheta}$ and hence
\begin{align}\label{u-v-p-2}
\fint_{\Omega_{4\xi}(x_0)}|\mathrm{D}\mathbf{u} - \mathrm{D}\mathbf{v}|^pdx \le C \delta^{\vartheta} \xi^{-\beta}\lambda.
\end{align}
Using the elementary inequality $|\mathrm{D}\mathbf{v}|^p\le 2^{p-1}\left(|\mathrm{D}\mathbf{u}|^p + |\mathrm{D}\mathbf{u}-\mathrm{D}\mathbf{v}|^p\right)$, it deduces from~\eqref{est-u} and~\eqref{u-v-p-2} that
\begin{align*}
\fint_{\Omega_{4\xi}(x_0)}|\mathrm{D}\mathbf{v}|^p dx \le 2^{p-1} \left(\fint_{\Omega_{4\xi}(x_0)}|\mathrm{D}\mathbf{u}|^p dx + \fint_{\Omega_{4\xi}(x_0)}|\mathrm{D}\mathbf{u} - \mathrm{D}\mathbf{v}|^p dx\right)  \le C\left[\sigma + \delta^{\vartheta}\right] \xi^{-\beta}\lambda,
\end{align*}
which by~\eqref{v-Linf} guarantees 
\begin{align}\label{v-Linf-2}
\sup_{x \in \Omega_{2\xi}(x_0)} |\mathrm{D}\mathbf{v}(x)| \le C \left(\fint_{\Omega_{4\xi}(x_0)}{(|\mathrm{D}\mathbf{v}|^p + \mu^p) dx}\right)^{\frac{1}{p}} \le C\left[\sigma + \delta^{\vartheta} + \kappa\right]^{\frac{1}{p}} \left(\xi^{-\beta}\lambda\right)^{\frac{1}{p}}.
\end{align}
By the inequality $|\mathrm{D}\mathbf{u}|^p\le 2^{p-1}\left(|\mathrm{D}\mathbf{u}-\mathrm{D}\mathbf{v}|^p+|\mathrm{D}\mathbf{v}|^p\right)$, it follows that
\begin{align*}
\mathbf{M}_{\beta,\Omega_{2\xi}(x_0)}^{\xi} \left(|\mathrm{D}\mathbf{u}|^p\right)\le2^{p-1}\left[\mathbf{M}_{\beta,\Omega_{2\xi}(x_0)}^{\xi} \left(|\mathrm{D}\mathbf{u}-\mathrm{D}\mathbf{v}|^p\right) + \mathbf{M}_{\beta,\Omega_{2\xi}(x_0)}^{\xi} \left(|\mathrm{D}\mathbf{v}|^p\right)\right].
\end{align*}
Combining this inequality with~\eqref{3.19}, one obtains
\begin{align}\label{3.34}
\left|B_\xi(x_0)\cap\mathcal{V}_{\lambda,\kappa}\right| & \le \left|\left\{\upsilon\in B_\xi(x_0): \ \mathbf{M}_{\beta,\Omega_{2\xi}(x_0)}^{\xi} \left(|\mathrm{D}\mathbf{u}-\mathrm{D}\mathbf{v}|^p\right)(\upsilon)>2^{-p}\lambda\right\}\right| \notag \\
& \qquad \qquad + \left|\left\{\upsilon\in B_\xi(x_0): \ \mathbf{M}_{\beta,\Omega_{2\xi}(x_0)}^{\xi} \left(|\mathrm{D}\mathbf{v}|^p\right)(\upsilon) > 2^{-p}\lambda\right\}\right|. 
\end{align}
For any $\upsilon\in B_\xi(x_0)$ and $0<\rho < \xi$, one can check that $B_\rho(\upsilon) \subset B_{2\xi}(x_0)$. For this reason,  inequality~\eqref{v-Linf-2}  yields that
\begin{align*}
\mathbf{M}_{\beta,\Omega_{2\xi}(x_0)}^{\xi} \left(|\mathrm{D}\mathbf{v}|^p\right)(\upsilon) & = \sup_{0<\rho< \xi}\rho^\beta \fint_{B_\rho(\upsilon)}\chi_{\Omega_{2\xi}(x_0)}|\mathrm{D}\mathbf{v}|^p dx  \\
& \le \xi^\beta	\sup_{x \in \Omega_{2\xi}(x_0)} |\mathrm{D}\mathbf{v}(x)|^p \\
& \le  C_1\left[\sigma + \delta^{\vartheta} + \kappa\right] \lambda, \quad \mbox{ for all } \upsilon\in B_\xi(x_0).
\end{align*}
With this estimate, if we choose $\sigma, \delta, \kappa$ small enough such that
\begin{align}\label{sig-del-kap-2}
\sigma < 3^{-1}C_1^{-1}2^{-p}, \ \delta^{\vartheta} < 3^{-1}C_1^{-1}2^{-p}, \mbox{ and } \kappa < 3^{-1}C_1^{-1}2^{-p},
\end{align}
then the last term in~\eqref{3.34} vanishes, i.e.
\begin{align*}
\left|\left\{\upsilon\in B_\xi(x_0): \ \mathbf{M}_{\beta,\Omega_{2\xi}(x_0)}^{\xi} \left(|\mathrm{D}\mathbf{v}|^p\right)(\upsilon) > 2^{-p}\lambda\right\}\right| = 0.
\end{align*}
Therefore, inequality~\eqref{3.34} can be rewritten as
\begin{align}\notag
\left|B_\xi(x_0)\cap\mathcal{V}_{\lambda,\kappa}\right| & \le \left|\left\{\upsilon\in B_\xi(x_0): \ \mathbf{M}_{\beta,\Omega_{2\xi}(x_0)}^{\xi} \left(|\mathrm{D}\mathbf{u}-\mathrm{D}\mathbf{v}|^p\right)(\upsilon)>2^{-p}\lambda\right\}\right|. 
\end{align}
Applying the boundedness property of $\mathbf{M}_{\beta}$ and inequality~\eqref{u-v-p-2}, we arrive
\begin{align}\label{3.41}
\left|B_\xi(x_0)\cap\mathcal{V}_{\lambda,\kappa}\right| &\le C\left(2^p\lambda^{-1}\xi^n\fint_{\Omega_{2\xi}(x_0)}|\mathrm{D}\mathbf{u}-\mathrm{D}\mathbf{v}|^p dx\right)^{\frac{n}{n-\beta}} \le C_2 \delta^\frac{n\vartheta}{n-\beta}|B_\xi(x_0)|.
\end{align}
We can obtain~\eqref{3.17} from~\eqref{3.41} by choosing $\delta$ such that $C_2 \delta^\frac{n\vartheta}{n-\beta} \le \varepsilon$.\\ 

Gathering the estimates established in~\eqref{lamb-kappa}--\eqref{sig-del-kap-2}, we conclude that the pair $(\mathcal{V}_{\lambda,\kappa}, \mathcal{W}_{\lambda,\sigma})$ satisfies all the requirements of Lemma~\ref{lem:Vitali}, for all $\lambda \ge \mu^p \mathrm{diam}(\Omega)^{\beta} \kappa^{-1}$, under the following constraints
\begin{align}\notag 
\begin{cases} 0<\sigma< \min\left\{3^{-1}C_1^{-1}2^{-p}; 3^{-n}\right\}, \\
[\mathfrak{a}]^{r_0}_{\mathrm{BMO}} < \delta < \delta_{\mathrm{BMO}} < \min\left\{\left(3^{-1}C_1^{-1}2^{-p}\right)^{-\vartheta}; \left(C_2^{-1}\varepsilon\right)^\frac{n-\beta}{n\vartheta} \right\}, \\
0< \kappa \le \min\left\{3^{-1}C_1^{-1}2^{-p}; \frac{C_{\delta}^{-1}\delta}{3}; \left(C_0^{-1}\varepsilon\right)^\frac{n-\beta}{n}\right\}.\end{cases}
\end{align}
As a consequence of Lemma~\ref{lem:Vitali}, the proof of~\eqref{LV-ineq} is now complete.
\end{proof}

\section{Proofs of the main results}
\label{sec:main}

With the preceding technical lemmas at hand, in the remaining part of this paper, we focus on the proofs of the main theorems already stated in Section~\ref{sec:intro}.

\begin{proof}[Proof of Theorem~\ref{theo-CZ}]
It is straightforward to see that inequality~\eqref{CZ-Mbeta-ineq} implies~\eqref{CZ-ineq} by taking $\beta=0$ and applying the boundedness of the Hardy-Littlewood maximal operator $\mathbf{M}$ on the Lebesgue spaces $L^{q/p}(\Omega;\mathbb{R})$. Thus, it is sufficient to prove~\eqref{CZ-Mbeta-ineq}. Given $\varepsilon \in (0,1)$, thanks to Lemma~\ref{theo-LV}, there exist constants 
$$\sigma=\sigma(\beta,\texttt{SD})>0, \ \delta_{\mathrm{BMO}}=\delta_{\mathrm{BMO}}(\varepsilon,\beta,\texttt{SD})>0,\ \kappa=\kappa(\varepsilon,\delta_{\mathrm{BMO}},\beta,\texttt{SD})>0,$$ 
such that if $[\mathfrak{a}]^{r_0}_{\mathrm{BMO}} \le \delta_{\mathrm{BMO}}$ and $\Omega$ is $(\delta_{\mathrm{BMO}},r_0)$-Reifenberg flat for some $0<r_0<\mathrm{diam}(\Omega)$, then   
\begin{align}\notag
\left|\left\{\mathbf{M}_{\beta}(|\mathrm{D}\mathbf{u}|^p) > \lambda; \ \mathbf{M}_{\beta}(|\mathbf{F}|^p) \le \kappa\lambda\right\}\right| \le C \varepsilon \left|\left\{\mathbf{M}_{\beta}(|\mathrm{D}\mathbf{u}|^p) > \sigma\lambda\right\}\right|, 
\end{align}
for every $\lambda \ge \lambda_0:= \mu^p\mathrm{diam}(\Omega)^{\beta} \kappa^{-1}$. This inequality directly implies to
\begin{align}\notag
\left|\left\{\mathbf{M}_{\beta}(|\mathrm{D}\mathbf{u}|^p) > \lambda\right\}\right| \le C \varepsilon \left|\left\{\sigma^{-1}\mathbf{M}_{\beta}(|\mathrm{D}\mathbf{u}|^p) > \lambda\right\}\right| + \left|\left\{\kappa^{-1}\mathbf{M}_{\beta}(|\mathbf{F}|^p) > \lambda\right\}\right|. 
\end{align}
For every $q >1$, multiplying both sides of this inequality by $q\lambda^{q-1}$ and taking the integral over $(\lambda_0,\infty)$, one obtains
\begin{align}\notag
\int_{\lambda_0}^{\infty} q \lambda^{q-1}\left|\left\{\mathbf{M}_{\beta}(|\mathrm{D}\mathbf{u}|^p) > \lambda\right\}\right| d\lambda & \le C \varepsilon \int_{\lambda_0}^{\infty} q \lambda^{q-1}\left|\left\{\sigma^{-1}\mathbf{M}_{\beta}(|\mathrm{D}\mathbf{u}|^p) > \lambda\right\}\right| d\lambda  \\
& \qquad \qquad + \int_{\lambda_0}^{\infty} q \lambda^{q-1} \left|\left\{\kappa^{-1}\mathbf{M}_{\beta}(|\mathbf{F}|^p) > \lambda\right\}\right| d\lambda \notag \\
& \le C \varepsilon \int_{0}^{\infty} q \lambda^{q-1}\left|\left\{\sigma^{-1}\mathbf{M}_{\beta}(|\mathrm{D}\mathbf{u}|^p) > \lambda\right\}\right| d\lambda \notag  \\
& \qquad \qquad + \int_{0}^{\infty} q \lambda^{q-1} \left|\left\{\kappa^{-1}\mathbf{M}_{\beta}(|\mathbf{F}|^p) > \lambda\right\}\right| d\lambda.\label{final-1}
\end{align}
It is worth mentioning that the norm of $f$ in $L^q(\Omega;\mathbb{C}^{N \times n})$ can be presented as the integral over $(0,\infty)$ of the level sets 
\begin{align}
\int_{\Omega} |f(x)|^q dx =  \int_{0}^{\infty} q \lambda^{q-1} \left|\left\{|f|> \lambda\right\}\right| d\lambda.\label{Lp-new}
\end{align}
Using this presentation, we can decompose
\begin{align}
\|\mathbf{M}_{ \beta}(|\mathrm{D}\mathbf{u}|^p)\|_{L^q(\Omega;\mathbb{R})}^q & = \int_0^{\lambda_0} q \lambda^{q-1}\left|\left\{\mathbf{M}_{\beta}(|\mathrm{D}\mathbf{u}|^p) > \lambda\right\}\right| d\lambda + \int_{\lambda_0}^{\infty} q \lambda^{q-1}\left|\left\{\mathbf{M}_{\beta}(|\mathrm{D}\mathbf{u}|^p) > \lambda\right\}\right| d\lambda \notag \\
& \le |\Omega| \lambda_0^q + \int_{\lambda_0}^{\infty} q \lambda^{q-1}\left|\left\{\mathbf{M}_{\beta}(|\mathrm{D}\mathbf{u}|^p) > \lambda\right\}\right| d\lambda.\notag
\end{align}
Substituting~\eqref{final-1} into the last term of this inequality, we obtain
\begin{align}
\|\mathbf{M}_{ \beta}(|\mathrm{D}\mathbf{u}|^p)\|_{L^q(\Omega;\mathbb{R})}^q & \le  C\sigma^{-q}\varepsilon 	\|\mathbf{M}_{ \beta}(|\mathrm{D}\mathbf{u}|^p)\|_{L^q(\Omega;\mathbb{R})}^q + C \kappa^{-q}\left(\|\mathbf{M}_{\beta}(|\mathbf{F}|^p)\|_{L^q(\Omega;\mathbb{R})}^q + \mu^{pq}\right).\label{final-2}
\end{align}
Finally, we establish~\eqref{CZ-Mbeta-ineq} by fixing $\varepsilon = \varepsilon_0$ in~\eqref{final-2} such that $C\sigma^{-q}\varepsilon_0 < \frac{1}{2}$. At this stage, we emphasize that $\sigma$ is independent of both $\delta_{\mathrm{BMO}}$ and $\varepsilon$. The proof is complete.
\end{proof}

\begin{proof}[Proof of Theorem~\ref{theo-MorreyM}]
Following a fashion similar to the proof of Theorem~\ref{theo-CZ}, we first employ Lemma~\ref{theo-LV} to derive the level-set estimate. Specifically, for any $\varepsilon \in (0,1)$,  there exist constants 
$$\sigma=\sigma(\texttt{SD})>0, \ \delta_{\mathrm{BMO}}=\delta_{\mathrm{BMO}}(\varepsilon,\texttt{SD})>0,\ \kappa=\kappa(\varepsilon,\delta_{\mathrm{BMO}},\texttt{SD})>0,$$ 
such that if $[\mathfrak{a}]^{r_0}_{\mathrm{BMO}} \le \delta_{\mathrm{BMO}}$ and $\Omega$ is $(\delta_{\mathrm{BMO}},r_0)$-Reifenberg flat for some $0<r_0<\mathrm{diam}(\Omega)$, then   
\begin{align}\label{Morrey-est-1}
\left|\left\{\mathbf{M}(|\mathrm{D}\mathbf{u}|^p) > \lambda; \ \mathbf{M}(|\mathbf{F}|^p) \le \kappa\lambda\right\}\right| \le C \varepsilon \left|\left\{\mathbf{M}(|\mathrm{D}\mathbf{u}|^p) > \sigma\lambda\right\}\right|, 
\end{align}
for every $\lambda \ge \lambda_0:= \mu^q\mathrm{diam}(\Omega)^{\beta} \kappa^{-1}$. Let $\omega \in \mathcal{A}_{\infty}$ be a Muckenhoupt weight. By the virtue of~\eqref{Muck-w} and~\eqref{Morrey-est-1}, there exist two constant $C$ and $\nu>0$ such that
\begin{align}\notag
\omega\left(\left\{\mathbf{M}(|\mathrm{D}\mathbf{u}|^p) > \lambda; \ \mathbf{M}(|\mathbf{F}|^p) \le \kappa\lambda\right\}\right) \le C \varepsilon^{\nu} \omega\left(\left\{\mathbf{M}(|\mathrm{D}\mathbf{u}|^p) > \sigma\lambda\right\}\right).
\end{align} 
This inequality leads directly to 
\begin{align}\label{Morrey-est-2}
\omega\left(\left\{\mathbf{M}(|\mathrm{D}\mathbf{u}|^p) > \lambda\right\}\right) \le C \varepsilon^{\nu} \omega\left(\left\{\sigma^{-1}\mathbf{M}(|\mathrm{D}\mathbf{u}|^p) > \lambda\right\}\right) + \omega\left(\left\{\kappa^{-1}\mathbf{M}(|\mathbf{F}|^p) > \lambda\right\}\right). 
\end{align}
Similar to~\eqref{Lp-new}, for $q>1$ and $f \in L^{q}_{\omega}(\Omega;\mathbb{C}^{N \times n})$, it allows us to write
\begin{align}\notag
\int_{\Omega} |f(x)|^{q} \omega(x) dx =  \int_{0}^{\infty} q \lambda^{q-1} \omega\left(\left\{|f|> \lambda\right\}\right) d\lambda.
\end{align}
Applying this formula for~\eqref{Morrey-est-2}, it gives
\begin{align}\label{Morrey-est-3}
\int_{\Omega} \left[\mathbf{M}(|\mathrm{D}\mathbf{u}|^p)\right]^{q} \omega(x) dx \le  C \left( \varepsilon^{\nu} \sigma^{-1} \int_{\Omega}  \left[\mathbf{M}(|\mathrm{D}\mathbf{u}|^p)\right]^{q} \omega(x) dx + \kappa^{-1}\int_{\Omega} \left[\mathbf{M}(|\mathbf{F}|^p)\right]^{q} \omega(x) dx + \mu^q\right).
\end{align} 
By choosing $\varepsilon>0$ small enough in~\eqref{Morrey-est-3}, we arrive at
\begin{align}\label{Morrey-est-4}
\int_{\Omega} \left[\mathbf{M}(|\mathrm{D}\mathbf{u}|^p)\right]^{q} \omega(x) dx \le  C \left(\int_{\Omega} \left[\mathbf{M}(|\mathbf{F}|^p)\right]^{q} \omega(x) dx + \mu^q\right).
\end{align}
To estimate the Morrey-space norm of $\mathbf{M}(|\mathrm{D}\mathbf{u}|^p)$ in $\mathcal{M}^{q,s}(\Omega;\mathbb{R})$, where $q<s<\infty$, it is necessary to evaluate the supremum of the following integral 
\begin{align*}
\mathbb{I}_{\varrho}(y) & := \frac{1}{|B_{\varrho}(y)|^{1-\frac{q}{s}}}\int_{\Omega_{\varrho}(y)} \left[\mathbf{M}(|\mathrm{D}\mathbf{u}|^p)\right]^{q} dx,
\end{align*}
for all $y \in \Omega$ and $0<\varrho<\mathrm{diam}(\Omega)$. At this stage, it enables us to represent
\begin{align*}
\mathbb{I}_{\varrho}(y) = \frac{1}{|B_{\varrho}(y)|^{1-\frac{q}{s}}} \int_{\Omega} \left[\mathbf{M}(|\mathrm{D}\mathbf{u}|^p)\right]^{q} \chi_{B_{\varrho}(y)}(x) dx.
\end{align*}
Let us take $\tau \in (0,1)$ and set $\omega := \left(\mathbf{M} \chi_{B_{\varrho}(y)}\right)^{\tau}$. According to~\cite[Proposition 2]{CR80}, $\omega$ is a Muckenhoupt weight; and more specifically, $\omega \in \mathcal{A}_{1}$. This fact allows us to apply~\eqref{Morrey-est-4} with this choice of weight. Furthermore, we observe that $\chi_{B_{\varrho}(y)}(x) \le \left(\mathbf{M} \chi_{B_{\varrho}(y)}\right)(x) \le \omega(x) \le 1$ for almost every $x \in \mathbb{R}^n$, which leads to
\begin{align}\label{est-D-2}
\mathbb{I}_{\varrho}(y) & \le C \left( \frac{1}{|B_{\varrho}(y)|^{1-\frac{q}{s}}} \int_{\Omega} \mathcal{F}(x) \omega(x) dx + \mu^q\right),
\end{align}
where $\mathcal{F} := \left[\mathbf{M}(|\mathbf{F}|^{p})\right]^{q}$. Let us use the following decomposition $$\Omega = \Omega_{2\varrho}(y) \cup \left(\Omega \cap \bigcup_{j=1}^{\infty} B_{2^{j+1}\varrho}(y) \setminus B_{2^j\varrho}(y) \right)$$
to present the inequality in~\eqref{est-D-2} as 
\begin{align} \label{est-D-3}
\mathbb{I}_{\varrho}(y) & \le C \left( \frac{1}{|B_{\varrho}(y)|^{1-\frac{q}{s}}} \int_{\Omega_{2\varrho}(y)} \mathcal{F}(x) \omega(x) dx  + \sum_{j=1}^{\infty} \frac{1}{|B_{\varrho}(y)|^{1-\frac{q}{s}}}\int_{\Omega_{2^{j+1}\varrho}(y) \setminus B_{2^j\varrho}(y)} \mathcal{F}(x) \omega(x) dx + \mu^q\right).
\end{align}
Since $\omega(x) \le 1$ in the ball $B_{2\varrho}(y)$ and $|B_{\varrho}(y)| = {2^{-n}} |B_{2\varrho}(y)|$, we have
\begin{align}\label{Morrey-est-5}
I_0 & := \frac{1}{|B_{\varrho}(y)|^{1-\frac{q}{s}}} \int_{\Omega_{2\varrho}(y)} \mathcal{F}(x) \omega(x) dx \notag \\
& \le  2^{n\left(1-\frac{q}{s}\right)} \left(\frac{1}{|B_{2\varrho}(y)|^{1-\frac{q}{s}}} \int_{\Omega_{2\varrho}(y)} \mathcal{F}(x) dx\right) \notag \\
& \le 2^{n\left(1-\frac{q}{s}\right)} \|\mathcal{F}\|_{\mathcal{M}^{q,s}(\Omega;\mathbb{R})}^q. 
\end{align}
By direct calculation, we can show that $\omega(x) \le \frac{1}{2^{(j-1)n\tau}}$ in the annular region $B_{2^{j+1}\varrho}(y) \setminus B_{2^j\varrho}(y)$ for every $j \ge 1$. We also refer the reader to~\cite[Lemma 3.6]{NT25} for the detailed proof of this estimate. Moreover, combining this with the fact that $|B_{\varrho}(y)| = {2^{-(j+1)n}} |B_{2^{j+1}\varrho}(y)|$, one has
\begin{align}\label{Morrey-est-6}
I_j & := \frac{1}{|B_{\varrho}(y)|^{1-\frac{q}{s}}}\int_{\Omega_{2^{j+1}\varrho}(y) \setminus B_{2^j\varrho}(y)} \mathcal{F}(x) \omega(x) dx \notag \\
&\le \frac{1}{2^{(j-1)n\tau}} 2^{(j+1)n\left(1-\frac{q}{s}\right)} \left( \frac{1}{|B_{2^{j+1}\varrho}(y)|^{1-\frac{q}{s}}}\int_{\Omega_{2^{j+1}\varrho}(y)} \mathcal{F}(x) dx\right) \notag \\
& \le 2^{n\left(1-\frac{q}{s}+\tau\right)-jn\left(1-\frac{q}{s}-\tau\right)} \|\mathcal{F}\|_{\mathcal{M}^{q,s}(\Omega;\mathbb{R})}^q.
\end{align}
Substituting~\eqref{Morrey-est-5} and~\eqref{Morrey-est-6} into~\eqref{est-D-3}, it yields
\begin{align} 
\mathbb{I}_{\varrho}(y) & \le C \left( 2^{n\left(1-\frac{q}{s}\right)} \|\mathcal{F}\|_{\mathcal{M}^{q,s}(\Omega;\mathbb{R})}^q  + 2^{n\left(1-\frac{q}{s}+\tau\right)}\sum_{j=1}^{\infty} 2^{-jn\left(1-\frac{q}{s}-\tau\right)} \|\mathcal{F}\|_{\mathcal{M}^{q,s}(\Omega;\mathbb{R})}^q + \mu^q\right)\notag \\
& \le C \left(2^{n\left(1-\frac{q}{s}+\tau\right)}\sum_{j=0}^{\infty} 2^{-jn\left(1-\frac{q}{s}-\tau\right)} \|\mathcal{F}\|_{\mathcal{M}^{q,s}(\Omega;\mathbb{R})}^q + \mu^q\right).\notag
\end{align}
Finally, let us set $0< \tau < 1-\frac{q}{s}$, which ensures that 
$$\sum_{j=0}^{\infty} 2^{-jn\left(1-\frac{q}{s}-\tau\right)}< \infty.$$ 
The proof of~\eqref{Morrey-ineq} is complete by taking the supremum for all $y \in \Omega$ and $0<\varrho<\mathrm{diam}(\Omega)$.
\end{proof}


\section*{Acknowledgement}
This research is funded by Vietnam National Foundation for Science and Technology Development (NAFOSTED), Grant Number: 101.02-2025.03.

\section*{Conflict of Interest}
The authors declared that they have no conflict of interest.


\end{document}